\documentclass{amsart}
\usepackage{amssymb}
\usepackage{amsthm}
\usepackage{url}
\usepackage{mathtools}
\mathtoolsset{showonlyrefs=true}
\allowdisplaybreaks
\usepackage{Article}
\usepackage{Symbols}
\usepackage{hyperref}
\usepackage[
	backend=biber,
	style=alphabetic,
	maxalphanames=4,
	sorting=nty,
	arxiv=abs,
	doi=true,
	url=true,
	isbn=false,
	eprint=true
]{biblatex}
\usepackage{xurl}
\usepackage{needspace}

\makeatletter
\renewcommand{\subsection}{ \@startsection{subsection}{2}{\z@}  {.5\linespacing\@plus.7\linespacing}  {.5\linespacing}  {\normalfont\bfseries}}
\makeatother

\counterwithin{equation}{section}

\renewcommand{\thetheorem}{\thesection.\arabic{theorem}}

\makeatletter
\newcommand{\undeclare@theorem}[1]{%
	\expandafter\let\csname #1\endcsname\relax
	\expandafter\let\csname end#1\endcsname\relax
}
\@for\@tempa:={theorem,definition,lemma,proposition,corollary,conj,remark,note,eg,fact}\do{%
	\undeclare@theorem{\@tempa}%
	\undeclare@theorem{\@tempa*}%
}
\let\c@theorem\relax
\let\thetheorem\relax
\makeatother

\theoremstyle{plain}
\newtheorem{theorem}{Theorem}[section]
\newtheorem{lemma}[theorem]{Lemma}
\newtheorem{proposition}[theorem]{Proposition}
\newtheorem{corollary}[theorem]{Corollary}
\newtheorem{fact}[theorem]{Fact}

\newtheorem*{theorem*}{Theorem}
\newtheorem*{lemma*}{Lemma}
\newtheorem*{proposition*}{Proposition}
\newtheorem*{corollary*}{Corollary}
\newtheorem*{conj*}{Conjecture}

\theoremstyle{definition}

\newtheorem{remark}[theorem]{Remark}

\newtheorem*{definition*}{Definition}

\DeclareFieldFormat[article]{book}{\mkbibemph{#1}}
\DeclareFieldFormat*{volume}{\mkbibbold{#1}}
\renewbibmacro{in:}{}
\DeclareFieldFormat{pages}{#1}

\renewbibmacro*{journal+issuetitle}{%
	\usebibmacro{journal}%
	\setunit*{\addspace}%
	\iffieldundef{series}
		{}
		{\newunit\printfield{series}\setunit{\addspace}}%
	\usebibmacro{volume+number+eid}%
	\setunit{\addspace}%
	\usebibmacro{issue+date}%
	\setunit{\addcolon\space}%
	\usebibmacro{issue}%
	\newunit
}

\DeclareFieldFormat*{title}{\mkbibemph{#1}}
\DeclareFieldFormat*{journaltitle}{\textnormal{#1}}

\AtEveryBibitem{%
	\clearfield{note}%
	\iffieldundef{eprint}{}{%
		\clearfield{howpublished}\clearfield{url}%
		\ifentrytype{misc}{\clearfield{doi}}{}%
	}%
	\iffieldundef{doi}{}{\iffieldundef{addendum}{\clearfield{url}}{}}%
	\iffieldundef{url}{\clearfield{urlyear}\clearfield{urlmonth}\clearfield{urlday}}{}%
}

\newtheorem{example}[theorem]{Example}

\title[Finite Hypergeometric Reductions of Lauricella's \(F_C\)]{A Criterion for Finite Hypergeometric Reductions of Lauricella's \(F_C\) with Arguments Related by Cube Roots of Unity}
\author{{\sc Nakano} Ryunosuke}
\address{Graduate School of Science, Hokkaido University, Sapporo 060-0810, Japan}
\email{nakano.ryunosuke.i3@elms.hokudai.ac.jp}
\subjclass[2020]{Primary 33C65; Secondary 33C20, 12H05, 34M15}
\keywords{Lauricella function, differential Galois group}

\begin{document}
\hypersetup{hidelinks}
\begin{abstract}
We consider the germ at the origin of Lauricella's function \(F_C^{(3)}(a,b;c_0,c_1,c_2;\kappa,\omega\kappa,\omega^2\kappa)\), where \(\omega=\exp(2\pi\sqrt{-1}/3)\).
We assume that \(a,b,c_0,c_1,c_2\) avoid nonpositive integers and that \(c_0,c_1,c_2\) also avoid half-integers.
We consider finite linear combinations of ordinary generalized hypergeometric series with arguments \(\alpha\kappa^3\), where \(\alpha\) is a nonzero complex constant that may vary between summands, each multiplied by a nonnegative integer power of \(\kappa\).
Every target series has lower parameters outside the nonpositive integers and converges near the origin or terminates.
We prove that such a representation exists if and only if, for each \(j=0,1,2\), either \(c_j-c_0\) or \(c_j+c_0\) is an integer, and construct finite sums of ordinary \({}_9F_8\) functions when this condition holds.
We prove necessity from irreducible differential equations of order \(24\) for the generating functions of the three coefficient subsequences and from their formal solutions at infinity.
\end{abstract}
\maketitle
\raggedbottom

\section{Introduction}
\label{sec:introduction}
For \(u\in\C\) and \(n\in\Z_{\geq0}\), we put \((u)_0=1\) and \((u)_n=u(u+1)\cdots(u+n-1)\).
Lauricella's function \(F_C\) in three variables is defined by
\begin{equation}\label{eq:fc-definition}
 F_C^{(3)}(a,b;c_0,c_1,c_2;x_0,x_1,x_2)
 =\sum_{n_0,n_1,n_2\geq0}
 \frac{(a)_{n_0+n_1+n_2}(b)_{n_0+n_1+n_2}}
 {\prod_{j=0}^2(c_j)_{n_j}n_j!}
 \prod_{j=0}^2x_j^{n_j}.
\end{equation}
Here \(a,b,c_0,c_1,c_2,x_0,x_1,x_2\in\C\), and \(c_j\notin\Z_{\leq0}\) for \(j=0,1,2\).
The series converges absolutely when \(\sum_{j=0}^2\sqrt{|x_j|}<1\).
We put \(\omega=\exp(2\pi\sqrt{-1}/3)\) and consider the germ
\begin{equation}\label{eq:restriction}
 F_C^{(3)}(a,b;c_0,c_1,c_2;\kappa,\omega\kappa,\omega^2\kappa).
\end{equation}
Here and throughout the paper, a germ is taken at the origin.

We use the ordinary generalized hypergeometric series
\begin{equation}\label{eq:pfq-definition}
 {}_pF_q\left(\begin{matrix}\boldsymbol u\\ \boldsymbol v\end{matrix};x\right)
 =\sum_{n\geq0}
 \frac{\prod_{i=1}^p(u_i)_n}{\prod_{j=1}^q(v_j)_n}\frac{x^n}{n!}.
\end{equation}
Here \(p,q\in\Z_{\geq0}\), \(x\in\C\), \(\boldsymbol u=(u_1,\ldots,u_p)\in\C^p\), and \(\boldsymbol v=(v_1,\ldots,v_q)\in\C^q\).
Every lower parameter \(v_j\) avoids \(\Z_{\leq0}\).
We only use series that converge in a neighborhood of \(x=0\), or series that terminate.
Thus a nonterminating series satisfies \(p\leq q+1\).
In particular, we do not define the target functions by limits at singular lower parameters.

Henrici's triple product formula gives a single \({}_2F_7\) function when \(c_0=c_1=c_2\); see \cite{Henrici1987} and \cite[Eq.~(1.7)]{ChoiWangRathie2013Triple}.
Reductions obtained by multiplying its coefficients by factors depending on the total degree are given in \cite[Eq.~(2.1)]{ChoiWangRathie2013Triple}.
These formulas provide finite reductions for specified parameters.
However, they do not give a necessary condition for a finite sum of arbitrary ordinary generalized hypergeometric functions in \(\kappa^3\).
In this paper, we give a necessary and sufficient condition for such a finite sum.

We state the condition with all restrictions on the target functions.
\begin{theorem}\label{thm:main}
Suppose that
\begin{equation}\label{eq:main-assumptions}
 a,b\notin\Z_{\leq0},
 \qquad
 c_j\notin\Z_{\leq0}\cup\left(\tfrac12+\Z\right)
 \quad(j=0,1,2).
\end{equation}
The following conditions are equivalent.
\begin{enumerate}
\item There is an identity of germs
\begin{equation}\label{eq:finite-sum}
 F_C^{(3)}(a,b;c_0,c_1,c_2;\kappa,\omega\kappa,\omega^2\kappa)
 =\sum_{\ell=1}^L A_\ell\kappa^{e_\ell}
 {}_{p_\ell}F_{q_\ell}
 \left(\begin{matrix}\boldsymbol u_\ell\\ \boldsymbol v_\ell\end{matrix};
                 \alpha_\ell\kappa^3\right).
\end{equation}
Here \(L\in\Z_{>0}\), \(p_\ell,q_\ell\in\Z_{\geq0}\), \(A_\ell\in\C\), \(e_\ell\in\Z_{\geq0}\), and \(\alpha_\ell\in\C^\times\).
The lists \(\boldsymbol u_\ell=(u_{\ell,1},\ldots,u_{\ell,p_\ell})\) and \(\boldsymbol v_\ell=(v_{\ell,1},\ldots,v_{\ell,q_\ell})\) consist of complex constants independent of \(\kappa\).
Every \(v_{\ell,j}\) avoids \(\Z_{\leq0}\), and every target series converges near the origin or terminates.
\item For each \(j=0,1,2\), we have
\begin{equation}\label{eq:integer-condition}
 c_j-c_0\in\Z
 \quad\hbox{or}\quad
 c_j+c_0\in\Z.
\end{equation}
\end{enumerate}
When \eqref{eq:integer-condition} holds, the construction gives a finite sum of ordinary \({}_9F_8\) functions with argument \(64\kappa^3\) and nonnegative integer powers of \(\kappa\).
Some summands reduce to \({}_8F_7\) functions by cancellation of equal upper and lower parameters.
The finite sum is holomorphic on \(|\kappa|<1/4\) and agrees there with a continuation of the germ \eqref{eq:restriction}.
\end{theorem}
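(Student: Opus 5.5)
Expanding the germ \eqref{eq:restriction} coefficientwise and collecting monomials by degree gives
\[
 F(\kappa)=\sum_{N\ge0}\frac{(a)_N(b)_N}{N!}\,S_N\,\kappa^N ,
 \qquad
 S_N=\sum_{n_0+n_1+n_2=N}\binom{N}{n_0,n_1,n_2}\frac{\omega^{n_1+2n_2}}{(c_0)_{n_0}(c_1)_{n_1}(c_2)_{n_2}} .
\]
Splitting \(N=3m+r\) with \(r\in\{0,1,2\}\) gives \(F(\kappa)=\sum_{r=0}^2\kappa^r G_r(\kappa^3)\). The right side of \eqref{eq:finite-sum} also splits this way. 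Expanding each \({}_pF_q(\alpha\kappa^3)\) shows that \(\kappa^{e_\ell}{}_pF_q\) contributes only to the residue class \(e_\ell \bmod 3\). So condition (1) is equivalent to the following: each \(G_r(x)\) is a finite sum \(\sum A x^{k}{}_pF_q(\alpha x)\) with \(k\ge0\). The whole argument is therefore about the three coefficient subsequences \(t_r(m)=(a)_{3m+r}(b)_{3m+r}S_{3m+r}/(3m+r)!\).

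\emph{Sufficiency.} Assume \eqref{eq:integer-condition}. Use \(c_j\mapsto 1-c_j\)-type contiguity, so that each \(c_j\) is, up to an integer shift, \(c_0\) or \(1-c_0\). The key identity is the inner sum \(\sum_{n}\omega^{n}/((c)_n(c')_{N-n}\,n!(N-n)!)\) when \(c'-c\in\Z\) or \(c+c'\in\Z\). It is the coefficient of \(t^N\) in a product of two \({}_0F_1\)'s, and such a product is a \({}_1F_2\) or \({}_2F_3\) times a rational factor, by the Bailey/Orr-type product formulas. Then one proceeds as follows:
\begin{enumerate}
\item Apply this pairwise, or use Henrici's formula when all \(c_j\) are equal, together with the contiguous relations that shift \(c_j\) by integers; these relations produce finitely many terms.
\item Obtain \(S_N\) as a finite combination of terms that are hypergeometric in \(m\) on each residue class.
\item Multiply by \((a)_{3m+r}(b)_{3m+r}/(3m+r)!\). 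Gauss multiplication then gives \(a/3,(a+1)/3,(a+2)/3\), the corresponding \(b\) parameters, and three lower parameters from \((3m+r)!\) and the \(c\)'s.
\end{enumerate}
This produces \({}_9F_8(64\kappa^3)\) sums. The radius \(1/4\) matches \(64=4^3\) and the convergence region \(\sum\sqrt{|x_j|}<1\) when \(|\kappa|<1/9\). Agreement on \(|\kappa|<1/4\) then follows by analytic continuation.

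\emph{Necessity.} Assume (1). Each \(G_r\) is then annihilated by a finite-order operator whose local exponents at \(\infty\) and singular points come from finitely many hypergeometric operators of the form \(x^k\cdot{}_pF_q\). For such operators, every singular point is in \(\{0,\alpha_\ell^{-1},\infty\}\), and the formal solutions at \(\infty\) are of restricted type: exponents determined by upper parameters, and exponential parts \(\exp(\text{const}\,x^{1/(q+1-p)})\) with a single ramification pattern. Next, I would construct the minimal operator for \(G_r\) directly, as the paper announces: an irreducible operator of order \(24\) in \(x=\kappa^3\). It is obtained from the holonomic system of \(F_C\) (rank \(8\)) restricted to the line \((\kappa,\omega\kappa,\omega^2\kappa)\), then split by the \(\Z/3\)-action. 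Because it is irreducible, \(G_r\) cannot lie in a smaller space. Hence the sum of hypergeometric operators must contain this order-\(24\) operator as a right factor, and its formal solutions at infinity must be among those of the targets. The final step is to compute the formal solutions of the order-\(24\) operator at \(\infty\) (or at the finite singular points), and compare exponents. These exponents should involve \(\pm c_j\pm c_0\) in combinations that are compatible with a hypergeometric structure only when the integrality condition \eqref{eq:integer-condition} holds. Otherwise some local monodromy or exponent is not of the form that any \({}_pF_q\) at \(\alpha x\) can produce; for example, there would be non-integral exponent differences at a finite point \(\ne 0,\alpha^{-1}\), or ramified exponentials that are incompatible. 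The half-integer exclusion is used to rule out degenerate coincidences.

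The main obstacle is the necessity step. It requires two things: proving irreducibility of the order-\(24\) operator for generic-but-constrained parameters, which I would attempt via the differential Galois group or a monodromy argument from the \(F_C\) system; and extracting the exponents at infinity explicitly enough to force \eqref{eq:integer-condition}.
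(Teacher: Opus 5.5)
Both directions of your proposal have genuine gaps.

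\emph{Sufficiency.} The pairwise identity you build on is false. Bailey-type formulas express ${}_0F_1(;c;x)\,{}_0F_1(;c';\pm x)$ as a single hypergeometric series, but nothing analogous holds for the arguments $x$ and $\omega x$. With $x=t^2/4$, the product ${}_0F_1(;c;x)\,{}_0F_1(;c';\omega x)$ has the four distinct exponential factors $e^{(\pm1\pm\sqrt{\omega})t}$ at infinity. A rational multiple of a single ${}_pF_q(\alpha x)$ has at most two exponential factors of slope $1/2$. So your inner sums are not hypergeometric terms in $N$, and convolving with the third factor does not repair this. The only genuinely threefold closed form is Henrici's \eqref{eq:henrici}, which needs equal lower parameters. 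Your plan is therefore missing two ingredients:
(i) a separate identity for reflected parameters such as $(2-c,c,c)$. Reflection is not a contiguity relation. The paper obtains Proposition~\ref{prop:reflected-product} from the order-$8$ equation \eqref{eq:scalar-product}, which holds for every product of solutions of \eqref{eq:common-bessel}, applied with $y_0=z^{1-c}{}_0F_1(;2-c;z)$.
(ii) a mechanism for independent integer shifts. The contiguous relations produce mixed products $E_S$ containing $\theta y_j$. These must be rewritten through $\theta^k\mathcal H$, which is Lemma~\ref{lem:recovery} (its determinant is nonzero since $c\notin\tfrac12+\Z$), followed by deleting the negative powers of $z$.

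\emph{Necessity.} The order-$24$ equations belong to the subsequences $f_r$ in \eqref{eq:residue-functions}, obtained after dividing the coefficients by $(a)_N(b)_N$, not to your $G_r$. This division is the step you are missing. By Lemma~\ref{lem:coefficient-operations} it is allowed for $a,b\notin\Z_{\leq0}$, it preserves the target class (${}_pF_q\mapsto{}_pF_{q+6}$), and it removes $a,b$ from the problem. The module of $G_r$ depends on $a,b$, and nothing in your plan makes it irreducible for all admissible values; already the $F_C$ system is reducible when $a\in\Z_{>0}$, which the theorem allows.

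Moreover, the obstruction you expect is not there. $G_r$ is regular singular, with singularities only over $0,1/64,\infty$, exactly like ${}_9F_8(64x)$ targets. So there is no exponential part and no extra finite singular point to compare. For $f_r$, the exponential factors $e^{2\zeta_6^jt}$ are the same whether or not \eqref{eq:integer-condition} holds; Henrici's ${}_2F_7$ has exactly these. Hence no comparison of exponent values can detect the condition.

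The decisive invariant is multiplicity. When \eqref{eq:integer-condition} fails, the three rank-$8$ tensor products in \eqref{eq:root-filter} are pairwise nonisomorphic, by Proposition~\ref{prop:bessel-isomorphisms} and Lemma~\ref{lem:independent-groups}. So each nonzero exponential factor carries a regular singular factor of rank $3$, whereas in every ${}_pF_q$ equation it carries rank $1$ (Lemma~\ref{lem:hypergeometric-formal}).

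Finally, requiring the formal solutions of $f_r$ to be ``among those of the targets'' bounds nothing for an arbitrary finite sum, because multiplicities add across summands. You need irreducibility together with Lemma~\ref{lem:module-operations}(2) to force the whole rank-$24$ module to be a composition factor of a single target equation.
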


\begin{remark}\label{rem:scope}
The target class in \eqref{eq:finite-sum} allows different orders and different nonzero constants \(\alpha_\ell\) in different summands.
Polynomial factors give the same class after expansion into monomials.
Theorem~\ref{thm:main} concerns arguments of the form \(\alpha_\ell\kappa^3\) and target functions whose lower parameters avoid \(\Z_{\leq0}\).
It makes no assertion about general algebraic arguments, rational factors on the \(F_C\) side, parameter derivatives, or limits of sums with singular target parameters.
\end{remark}

Condition \eqref{eq:integer-condition} allows reflection of a lower parameter, in addition to an integer shift.
We can write that condition as
\begin{equation}\label{eq:signed-shifts}
 c_j=1+\varepsilon_j(c_0-1)+m_j,
 \qquad \varepsilon_j\in\{1,-1\},\quad m_j\in\Z.
\end{equation}
We prove sufficiency by contiguous relations for \({}_0F_1\), the scalar differential equation satisfied by the product in Henrici's formula, and removal of finitely many negative powers of an auxiliary variable.
Throughout the domain \eqref{eq:main-assumptions}, this construction gives target functions whose lower parameters avoid \(\Z_{\leq0}\); see Proposition~\ref{prop:construction}.

For necessity, we remove the two Pochhammer factors involving the upper parameters from the coefficients of \eqref{eq:restriction}.
With an auxiliary variable \(z\), we obtain the product \(\prod_{j=0}^2{}_0F_1(;c_j;\omega^jz)\).
We separate the coefficient sequence into the three residue classes modulo \(3\) and prove that each resulting generating function satisfies an irreducible differential equation of order \(24\) when \eqref{eq:integer-condition} fails; see Theorem~\ref{thm:rank24}.
After the substitution \(w=z^3\), the formal decomposition at infinity has six exponential factors with nonzero exponents, each accompanied by a regular singular factor of rank \(3\).
For an ordinary generalized hypergeometric equation with the same positive slope, each exponential factor with nonzero exponent is accompanied by a regular singular factor of rank \(1\).
Irreducibility then excludes finite sums as well as single generalized hypergeometric functions; see Proposition~\ref{prop:obstruction}.

\begin{remark}\label{rem:bessel-background}
We use the differential Galois correspondence and the relation between differential modules and representations in \cite[Theorem~1.3.9, Proposition~1.3.28, and Theorem~1.5.2]{Singer2009}.
For formal solutions we use \cite[\S1.4]{Singer2009}.
The numbered locators refer to the author manuscript arXiv:0712.4124v2.
We compute the differential Galois group of Bessel's equation and prove the isomorphism criterion needed here, both over the rational function field and after algebraic extension.
\end{remark}

\section{Coefficient operations and differential equations}
\label{sec:preliminaries}
In this section, we give the coefficient operations and the facts about differential equations used in the proof.

\subsection{Grouping coefficients}
We define the coefficients \(q_N\) by
\begin{equation}\label{eq:product-coefficients}
 \prod_{j=0}^2{}_0F_1(;c_j;\omega^jz)=\sum_{N\geq0}q_Nz^N.
\end{equation}
Since every factor is entire, the product is entire.
By grouping the absolutely convergent series \eqref{eq:fc-definition} by total degree, we obtain
\begin{equation}\label{eq:grouping}
 F_C^{(3)}(a,b;c_0,c_1,c_2;\kappa,\omega\kappa,\omega^2\kappa)
 =\sum_{N\geq0}(a)_N(b)_Nq_N\kappa^N
 \qquad(|\kappa|<1/9).
\end{equation}
We put
\begin{equation}\label{eq:delta}
 \Delta_h(u)=
 \left(\frac{u+h}{3},\frac{u+h+1}{3},\frac{u+h+2}{3}\right)
 \qquad(h\in\Z_{\geq0}).
\end{equation}
For a parameter list \(\boldsymbol u\), we use \((\boldsymbol u)_n=\prod_i(u_i)_n\) and \(\boldsymbol u+s=(u_i+s)_i\).
We use concatenation for parameter lists in a generalized hypergeometric series.

The following coefficient identities give both directions of the proof.
\begin{lemma}\label{lem:coefficient-operations}
For a formal power series \(\sum_{N\geq0}f_Nz^N\), multiplication of its coefficient of degree \(N\) by \((a)_N(b)_N\) sends
\begin{equation}\label{eq:forward-operation}
 z^h{}_pF_q
 \left(\begin{matrix}\boldsymbol u\\ \boldsymbol v\end{matrix};\beta z^3\right)
 \longmapsto
 (a)_h(b)_h\kappa^h{}_{p+6}F_q
 \left(\begin{matrix}\boldsymbol u,\Delta_h(a),\Delta_h(b)\\
                      \boldsymbol v\end{matrix};729\beta\kappa^3\right),
\end{equation}
where \(h\in\Z_{\geq0}\).
If \(a,b\notin\Z_{\leq0}\), division of the coefficient of degree \(N\) by \((a)_N(b)_N\) sends
\begin{equation}\label{eq:inverse-operation}
 \kappa^h{}_pF_q
 \left(\begin{matrix}\boldsymbol u\\ \boldsymbol v\end{matrix};\alpha\kappa^3\right)
 \longmapsto
 \frac{z^h}{(a)_h(b)_h}{}_pF_{q+6}
 \left(\begin{matrix}\boldsymbol u\\
             \boldsymbol v,\Delta_h(a),\Delta_h(b)\end{matrix};
             \frac{\alpha z^3}{729}\right).
\end{equation}
The additional lower parameters in \eqref{eq:inverse-operation} avoid \(\Z_{\leq0}\).
\end{lemma}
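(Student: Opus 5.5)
The plan is to compare coefficients termwise: both operations act diagonally on monomials. The whole lemma then reduces to a Pochhammer identity for $(a)_{h+3n}$ written through the lists $\Delta_h(a)$, which is a Gauss multiplication formula for rising factorials.

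First I expand
\[
z^h{}_pF_q\left(\begin{matrix}\boldsymbol u\\ \boldsymbol v\end{matrix};\beta z^3\right)=\sum_{n\geq0}\frac{(\boldsymbol u)_n}{(\boldsymbol v)_n}\frac{\beta^n}{n!}z^{h+3n}.
\]
The only nonzero coefficients sit in degrees $N=h+3n$. Multiplying the coefficient of degree $N$ by $(a)_N(b)_N$ and renaming $z$ as $\kappa$ (as in \eqref{eq:grouping}), the task is to rewrite $(a)_{h+3n}$.

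The key identity is
\[
(a)_{h+3n}=(a)_h\,(a+h)_{3n},
\qquad
(a+h)_{3n}=27^n\prod_{i=0}^{2}\left(\frac{a+h+i}{3}\right)_n=27^n(\Delta_h(a))_n.
\]
The second equality follows by splitting the product $\prod_{k=0}^{3n-1}(a+h+k)$ according to $k\bmod 3$: write $k=3m+i$ and factor $3$ out of each term. The same holds for $b$. Hence the new coefficient of $\kappa^{h+3n}$ is
\[
(a)_h(b)_h\,\frac{(\boldsymbol u)_n(\Delta_h(a))_n(\Delta_h(b))_n}{(\boldsymbol v)_n}\frac{(729\beta)^n}{n!},
\]
which is exactly the right side of \eqref{eq:forward-operation}.

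For \eqref{eq:inverse-operation} I run the same computation backwards, dividing by $(a)_{h+3n}(b)_{h+3n}=(a)_h(b)_h\,729^n(\Delta_h(a))_n(\Delta_h(b))_n$. The new lower parameters then appear, and the argument becomes $\alpha z^3/729$.

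The only point needing care, which is the mild obstacle, is legitimacy: the divisions must be by nonzero numbers, and the new lower parameters must avoid $\Z_{\leq0}$.

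- Since $a\notin\Z_{\leq0}$, we have $a+k\neq0$ for all $k\geq0$, so $(a)_h\neq0$.
- Suppose $(a+h+i)/3=-m$ with $m\in\Z_{\geq0}$ and $0\le i\le 2$. Then $a=-(3m+h+i)\in\Z_{\leq0}$, a contradiction. The same argument applies to $b$.

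This also guarantees that the inverse image is a genuine ${}_pF_{q+6}$ in the sense of \eqref{eq:pfq-definition}. If the original series terminates because some $u_i\in\Z_{\leq0}$, the image terminates too, since the upper list is unchanged.

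Convergence is not needed for the lemma, since it is stated for formal power series. I would nevertheless remark that the ${}_pF_{q+6}$ image has infinite radius whenever the source converges near $0$.
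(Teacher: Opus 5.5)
Your proposal is correct and is essentially the paper's proof: both compare coefficients of degree $h+3n$ using $(a)_{h+3n}=(a)_h\,27^n(\Delta_h(a))_n$. The paper quotes this identity as the Pochhammer form of the multiplication formula, whereas you verify it directly by splitting $\prod_{k=0}^{3n-1}(a+h+k)$ according to $k \bmod 3$, and your check that an entry of $\Delta_h(a)$ in $\Z_{\leq0}$ would force $a\in\Z_{\leq0}$ is the same as the paper's.
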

\begin{proof}
We use
\[
 (a)_{h+3n}=(a)_h\,3^{3n}
 \prod_{\delta\in\Delta_h(a)}(\delta)_n
\]
and the same equality for \(b\).
These are the Pochhammer forms of the multiplication formula \cite[Eq.~5.5.6]{DLMF}; see also \cite[\S5.5]{NIST2010}.
Comparison of the coefficient of degree \(h+3n\) gives both identities.
If an entry of \(\Delta_h(a)\) is a nonpositive integer, then \(a=-h-j-3n\) for some \(j\in\{0,1,2\}\) and \(n\geq0\).
This contradicts \(a\notin\Z_{\leq0}\).
The argument for \(b\) is the same.
\end{proof}

\begin{remark}\label{rem:coefficient-convergence}
The identities in Lemma~\ref{lem:coefficient-operations} are identities of formal series before any assertion of convergence.
In \eqref{eq:inverse-operation}, a convergent nonterminating target has \(p\leq q+1\), so the transformed series is entire.
A terminating series remains a polynomial.
In the construction used to prove sufficiency, \eqref{eq:forward-operation} sends a \({}_3F_8\) series with argument \(64z^3/729\) to a \({}_9F_8\) series with argument \(64\kappa^3\).
The resulting series converges when \(|\kappa|<1/4\).
\end{remark}

\subsection{Differential modules}
For a variable \(x\), we write \(\partial_x=d/dx\).
A differential module over \(\C(x)\) is a vector space of finite dimension over \(\C(x)\) with a derivation satisfying the product rule.
For a function \(f\) satisfying a scalar linear differential equation, we put
\[
 \mathcal N_f=\C(x)f+\C(x)\partial_xf+\C(x)\partial_x^2f+\cdots.
\]
The rank of \(\mathcal N_f\) is the order of the monic minimal scalar equation for \(f\).
We call a differential module irreducible if its only differential submodules are zero and the whole module.
For a scalar equation we use irreducibility in the corresponding sense of factorization over \(\C(x)[\partial_x]\).
We state formal expansions as expansions of solutions of scalar equations.
The representation on these scalar solutions is dual to the representation on horizontal solutions of the cyclic module \(\mathcal N_f\).
Duality negates the exponents and preserves their associated ranks, so the comparisons of subquotients below apply to either convention.

We record two consequences of the representation theory of differential modules.
\begin{lemma}\label{lem:module-operations}
The following statements hold over \(\C(x)\).
\begin{enumerate}
\item Suppose that \(\mathcal N_1,\ldots,\mathcal N_s\) are pairwise nonisomorphic irreducible differential modules.
A vector in \(\bigoplus_i\mathcal N_i\) generates the whole direct sum if its projection to every summand is nonzero.
\item Suppose that \(f=\sum_{\ell=1}^L R_\ell(x)f_\ell\), where \(R_\ell\in\C(x)\) and each \(f_\ell\) satisfies a scalar equation.
If \(\mathcal N_f\) is irreducible, then \(\mathcal N_f\) is a composition factor of the differential module associated with an equation for at least one \(f_\ell\).
\end{enumerate}
\end{lemma}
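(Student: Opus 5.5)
For part (1), I will pass to the semisimple module \(\mathcal M=\bigoplus_i\mathcal N_i\) and study the submodule \(\mathcal M'\) generated by a vector \(v=(v_1,\ldots,v_s)\) with every \(v_i\neq0\).
Since each \(\mathcal N_i\) is irreducible and \(v_i\neq0\), the projection \(\pi_i(\mathcal M')\) is a nonzero submodule of \(\mathcal N_i\), hence equals \(\mathcal N_i\).
Via the Tannakian correspondence \cite[Theorem~1.3.9, Proposition~1.3.28]{Singer2009}, submodules of a semisimple module correspond to subrepresentations of a semisimple representation of the differential Galois group.
So \(\mathcal M'\) is semisimple and isomorphic to a direct sum of some of the \(\mathcal N_i\), each with multiplicity at most one; the multiplicity bound uses that the \(\mathcal N_i\) are pairwise nonisomorphic.
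I will then argue by induction on \(s\).
Consider \(\mathcal M'\cap\mathcal N_s\), where \(\mathcal N_s\) is embedded as the last summand.
It is either \(0\) or \(\mathcal N_s\).
If it were \(0\), the projection to the first \(s-1\) summands would embed \(\mathcal M'\) into \(\bigoplus_{i<s}\mathcal N_i\).
By induction that image is all of \(\bigoplus_{i<s}\mathcal N_i\), so \(\mathcal M'\cong\bigoplus_{i<s}\mathcal N_i\).
Then \(\pi_s\) would be a surjection \(\bigoplus_{i<s}\mathcal N_i\to\mathcal N_s\), whose kernel is a direct summand by semisimplicity.
This forces \(\mathcal N_s\) to be isomorphic to some \(\mathcal N_i\) with \(i<s\), a contradiction.
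Hence \(\mathcal N_s\subset\mathcal M'\), and then \(\mathcal M'\) contains \(v-v_s\), which generates \(\bigoplus_{i<s}\mathcal N_i\) by induction; so \(\mathcal M'=\mathcal M\).

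For part (2), I will let \(\mathcal N_\ell=\mathcal N_{f_\ell}\), which is a quotient of the module of the chosen equation for \(f_\ell\).
Every composition factor of \(\mathcal N_\ell\) is therefore a composition factor of that equation's module.
Work in a Picard--Vessiot extension containing all the \(f_\ell\).
Taking the sum over \(\ell\) gives a surjective morphism of differential modules from \(\bigoplus_\ell\mathcal N_\ell\) onto \(\sum_\ell\mathcal N_\ell\), the \(\C(x)\)-span of all derivatives of the \(f_\ell\).
The module \(\mathcal N_f\) is a submodule of \(\sum_\ell\mathcal N_\ell\), because \(f\) and all its derivatives are \(\C(x)\)-combinations of derivatives of the \(f_\ell\).
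Thus \(\mathcal N_f\) is a composition factor of \(\sum_\ell\mathcal N_\ell\), hence of \(\bigoplus_\ell\mathcal N_\ell\) by Jordan--Hölder.
Since a composition series of the direct sum can be refined from the filtration by partial sums, the factors of \(\bigoplus_\ell\mathcal N_\ell\) are exactly those of the individual \(\mathcal N_\ell\).
So \(\mathcal N_f\) is a factor of some \(\mathcal N_\ell\), hence of the module of the equation for that \(f_\ell\).

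I expect the main technical point to be the identification of the cyclic modules \(\mathcal N_{f_\ell}\) with modules of scalar equations, given the solution-versus-horizontal duality mentioned before the lemma.
I will handle it by noting that duality is an exact anti-equivalence, so it preserves irreducibility and composition factors up to dualizing.
Since the statement is applied symmetrically, this suffices.
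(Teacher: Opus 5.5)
Your proposal is correct and follows essentially the paper's argument. In part (1) you prove, by an explicit induction with Schur's lemma, the fact the paper simply cites, namely that a submodule of a multiplicity-free semisimple module is a sum of some of the summands; this makes the Tannakian detour unnecessary. Part (2) is the paper's argument: the addition map from \(\bigoplus_\ell\mathcal N_{f_\ell}\) has image containing \(\mathcal N_f\), and a simple subquotient of the direct sum occurs in one summand. Your duality caveat is also unneeded, because with the convention \(\C(x)[\partial_x]/\C(x)[\partial_x]L\) the module \(\mathcal N_{f_\ell}\) is already a quotient of the module of any equation \(L\) for \(f_\ell\).
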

\begin{proof}
For the first assertion, a submodule of a finite direct sum of simple, pairwise nonisomorphic modules is a sum of some of the summands.
The projection condition includes every summand.
For the second assertion, we discard terms with \(R_\ell=0\).
Multiplication by a nonzero rational function preserves the differential module generated by a function, since \(\mathcal N_{R_\ell f_\ell}=\mathcal N_{f_\ell}\) as subspaces of the ambient differential field.
The addition map from \(\bigoplus_\ell\mathcal N_{R_\ell f_\ell}\) has an image containing \(\mathcal N_f\).
Thus \(\mathcal N_f\) is a subquotient of a finite direct sum of the modules for the equations of the \(f_\ell\).
By a composition series, a simple subquotient occurs in one of the summands.
\end{proof}

We also use the formal decomposition of a differential equation after a finite ramification at a singular point; see \cite[\S1.4, the formal fundamental matrix preceding Example~1.4.3]{Singer2009}.
At \(x=\infty\), after writing \(x=s^d\), the formal differential module is a direct sum of modules
\[
 \mathcal E_\varphi\otimes\mathcal R_\varphi.
\]
Here \(\varphi\) is a polynomial without constant term, \(\mathcal E_\varphi\) is a differential module of rank \(1\) whose horizontal equation is \(dy/ds=\varphi'(s)y\), and \(\mathcal R_\varphi\) is regular singular.
The equation for \(\mathcal E_\varphi\) has the solution \(e^{\varphi(s)}\).
The rank attached to \(\varphi\) is the rank of \(\mathcal R_\varphi\).
For \(\varphi\ne0\), the slope is \(\deg(\varphi)/d\).
The factor with \(\varphi=0\) is regular singular.
The decomposition is compatible with subquotients.
Consequently, we cannot increase the rank attached to a fixed exponential factor by passing to a subquotient.
These exponential factors and their associated ranks do not change when we tensor with a differential module of rank \(1\) whose equation has a solution \(x^\rho\), or when we multiply a function by a rational function.

\section{Construction of the finite sums}
\label{sec:construction}
In this section, we prove the sufficient direction of Theorem~\ref{thm:main} and ensure that the lower parameters of the resulting functions avoid nonpositive integers.

\subsection{The scalar equation for the product}
We put \(g=c-1\) and \(\theta=z\,d/dz\).
For \(j=0,1,2\), we take any solution \(y_j\) of
\begin{equation}\label{eq:common-bessel}
 \theta(\theta+g)y_j=\omega^jzy_j.
\end{equation}
For \(S\subset\{0,1,2\}\), we put
\[
 E_S=\prod_{j\in S}\theta y_j\prod_{j\notin S}y_j.
\]
We define
\[
 \mathcal H=E_\varnothing,\qquad
 \mathcal D=E_{\{0,1,2\}},\qquad
 \mathcal U_r=\sum_{j=0}^2\omega^{rj}E_{\{j\}},\qquad
 \mathcal V_r=\sum_{j=0}^2\omega^{rj}E_{\{0,1,2\}\setminus\{j\}},
\]
where \(r=0,1,2\).

The product rule gives the following equation for every choice of the three solutions.
\begin{lemma}\label{lem:product-equation}
We put
\begin{equation}\label{eq:indicial-polynomial}
 P_c(T)=T(T+3c-3)
       \prod_{j=1}^3(T+c-j)
       \prod_{j=2}^4(T+2c-j).
\end{equation}
Then
\begin{equation}\label{eq:scalar-product}
 \left[
 P_c(\theta)-64z^3
 \left(\theta+\frac{3(2c-1)}4\right)
 \left(\theta+\frac{3(2c+1)}4\right)
 \right]\mathcal H=0.
\end{equation}
\end{lemma}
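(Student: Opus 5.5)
The plan is to show that \(\mathcal H\), \(\mathcal U_r\), \(\mathcal V_r\), \(\mathcal D\) span a space that \(\theta\) preserves with coefficients in \(\C[z]\), and then to eliminate everything except \(\mathcal H\). The starting point is the product rule together with \eqref{eq:common-bessel}. Written as \(\theta(\theta y_j)=\omega^jzy_j-g\,\theta y_j\), it gives for every \(S\subset\{0,1,2\}\)
\[
 \theta E_S=\sum_{k\notin S}E_{S\cup\{k\}}
 +\sum_{j\in S}\bigl(\omega^jzE_{S\setminus\{j\}}-gE_S\bigr).
\]
Thus the eight functions \(E_S\) span a \(\theta\)-stable space. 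In the symmetrized basis \(\mathcal H,\mathcal U_r,\mathcal V_r,\mathcal D\), the sums \(\sum_{j}\omega^{rj}\) and \(\sum_{j\ne l}\omega^{rj}=3\delta_{r,0}-\omega^{rl}\) collapse the formulas. I expect
\[
 \theta\mathcal H=\mathcal U_0,
\]
with \(\theta\mathcal U_r\) a combination of \(-g\,\mathcal U_r\), a multiple of \(\mathcal V_{-r}\), and \(3z\mathcal H\) only when \(r\equiv2\). Similarly \(\theta\mathcal V_r\) should involve \(-2g\,\mathcal V_r\), \(\mathcal D\) only when \(r=0\), and \(z\mathcal U_{r'}\) for a shifted index \(r'\). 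Finally \(\theta\mathcal D\) should equal \(-3g\,\mathcal D+z\mathcal V_{1}\) or a similar shifted index.

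Next I will use the grading. Give \(z\) weight \(1\), and give \(\mathcal U_r\) and \(\mathcal V_r\) the weights dictated by the twist \(\omega^{rj}\) under \(z\mapsto\omega z\) modulo \(3\). The computed system is then homogeneous, and every appearance of \(z\) shifts the residue index. Starting from \(\mathcal H\) and applying \(\theta\) repeatedly, the chain runs
\[
 \mathcal H\to\mathcal U_0\to\mathcal V_0\to\mathcal D\to\mathcal V_{r}\to\mathcal U_{r'}\to\cdots
\]
Each step either stays at the same \(z\)-degree or picks up one factor \(z\). The chain should return to \(\mathcal H\) after all eight basis elements have been visited, with the accumulated factor \(z^3\).

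From this I will derive the equation by recording a relation at each step. Each step has the form \((\theta+\lambda_k)X_k=\mu_kX_{k+1}+(\text{terms already on the chain times }z)\). The constants \(\lambda_k\) are read off from the diagonal terms \(-g\cdot\#S\), shifted by the \(z\)-degree picked up along the chain. Composing these relations produces an operator \(Q(\theta)\) of degree \(8\) in \(\theta\) with \(Q(\theta)\mathcal H=64z^3R(\theta)\mathcal H\), where \(R\) has degree \(2\). Conjugation by \(z^k\) turns \(\theta\) into \(\theta+k\); this accounts for the factors \(T+c-j\) and \(T+2c-j\) in \eqref{eq:indicial-polynomial}. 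The quadratic factor \(R\) with roots \(-3(2c\pm1)/4\) comes from the off-diagonal terms in which \(z\mathcal H\) and \(z\mathcal U\) feed back into the chain.

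As an independent check I will use the indicial equation. Taking \(y_j\in\{{}_0F_1(;c;\omega^jz),\,z^{1-c}{}_0F_1(;2-c;\omega^jz)\}\) gives the eight leading exponents \(0,1-c,2-2c,3-3c\) with multiplicities \(1,3,3,1\). Since only \(z^3\) appears in the operator, these exponents spread over residues modulo \(3\); this matches the roots of \(P_c\), namely \(0\), \(3-3c\), \(j-c\) for \(j=1,2,3\), and \(j-2c\) for \(j=2,3,4\). The quadratic factor can then be pinned down by comparing one recursion for the coefficients \(q_N\) of the product of the three \({}_0F_1\) series. Henrici's formula, in which the product is a \({}_2F_7\) in \(64z^3/729\), gives exactly these two upper parameters and confirms the constant \(64\).

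The main obstacle is the elimination bookkeeping: the cross terms must close up correctly into a single scalar relation with exactly the factor \(z^3\) and the stated quadratic. To keep this manageable, I will first establish the \(\theta\)-stable system and the \(\Z/3\)-grading. I will then verify that the operator in \eqref{eq:scalar-product} annihilates the eight products \(E_\varnothing\) built from the basis solutions. These eight products are linearly independent because their leading exponents and residues differ. Since every solution \(y_j\) is a combination of the basis solutions and \(\mathcal H\) is multilinear in \((y_0,y_1,y_2)\), the equation then holds for every choice of the three solutions.
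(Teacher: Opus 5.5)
Your first step matches the paper: the product rule gives a $\theta$-stable system for $\mathcal H,\mathcal U_r,\mathcal V_r,\mathcal D$. One correction there: $(\theta+g)\mathcal U_r$ involves $\mathcal V_r$, not $\mathcal V_{-r}$; explicitly $(\theta+g)\mathcal U_0=2\mathcal V_0$, $(\theta+g)\mathcal U_1=-\mathcal V_1$, $(\theta+g)\mathcal U_2=3z\mathcal H-\mathcal V_2$.

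The genuine gap is that the lemma \emph{is} the elimination, and you neither carry it out nor describe its structure correctly. The system is not a chain:
\begin{itemize}
\item $(\theta+2g)\mathcal V_0=3\mathcal D-z\mathcal U_1$ introduces two new unknowns at once;
\item $(\theta+g)\mathcal U_2=3z\mathcal H-\mathcal V_2$ branches again;
\item $(\theta+2g)\mathcal V_2=-z\mathcal U_0$ returns to an earlier unknown.
\end{itemize}
So no ordering gives relations $(\theta+\lambda_k)X_k=\mu_kX_{k+1}+z(\cdots)$ that compose to the result. Your path $\mathcal H\to\mathcal U_0\to\mathcal V_0\to\mathcal D\to\mathcal V_1\to\mathcal U_2\to\mathcal H$ closes after only six unknowns, with weight $36z^3$. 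The coefficient $64$ of $z^3\theta^2$ is instead the sum $36+12+12+4$ over all four six-cycles of the system: these pass through $\mathcal D$ or $\mathcal U_1$, and close through $\mathcal H$ or $\mathcal V_2$. Likewise, each linear factor of the quadratic appears only when two such paths are merged.

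The paper does exactly this merging. With $P_0(T)=T(T+g)(T+2g)(T+3g)$ and $T_0(T)=(T+g-1)(T+g)(T+2g-1)(T+2g)$, it reduces the system to the coupled pair $P_0(\theta)\mathcal H=-2z(4\theta+1+6g)\mathcal U_1$ and $T_0(\theta)\mathcal U_1=-2z^2(4\theta+3+6g)\mathcal H$. It then applies $T_0(\theta-1)$ and uses $\theta z=z(\theta+1)$ and $P_0(T)T_0(T-1)=P_c(T)$. Your $\Z/3$-grading only shows that $z$ enters through $z^3$. It does not give the two-term shape $P(\theta)-z^3R(\theta)$ with $\deg R=2$.

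Your fallback does not close the gap.
\begin{itemize}
\item Indicial exponents and Henrici's recursion can identify $P_c$ and $R$ only after that shape is established. Moreover, reading off the exponents already presupposes that all eight products are solutions.
\item Checking the operator on the eight basis products needs a separate argument for the six mixed products, such as $z^{1-c}{}_0F_1(;2-c;z)\,{}_0F_1(;c;\omega z)\,{}_0F_1(;c;\omega^2z)$. Henrici's formula with parameter $c$ or $2-c$ covers only the two pure products. The paper derives the expansion of a mixed product (Proposition~\ref{prop:reflected-product}) from this very lemma, so that expansion is unavailable here.
\item A workable detour exists but you would have to supply it. For generic $c$, the cyclic-vector equation of the system is the same for every choice of $y_j$. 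It must coincide with the stated operator if the pure product has minimal order $8$, and you would have to prove that minimality.
\item Your basis $\{{}_0F_1(;c;\omega^jz),\,z^{1-c}{}_0F_1(;2-c;\omega^jz)\}$ exists only for $c\notin\Z$. The lemma holds for all $c$, since the paper's elimination is a polynomial identity in $c$. So integer $c$ would need an additional continuity argument.
\end{itemize}
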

\begin{proof}
We differentiate the eight products to obtain
\[
 \theta E_S
 =\sum_{j\notin S}E_{S\cup\{j\}}
  +z\sum_{j\in S}\omega^jE_{S\setminus\{j\}}
  -|S|gE_S.
\]
Substituting the definitions of the displayed sums gives
\begin{equation}\label{eq:eight-system}
\begin{array}{ll}
 \theta\mathcal H=\mathcal U_0,
 &(\theta+3g)\mathcal D=z\mathcal V_1,\\
 (\theta+g)\mathcal U_0=2\mathcal V_0,
 &(\theta+2g)\mathcal V_0=3\mathcal D-z\mathcal U_1,\\
 (\theta+g)\mathcal U_1=-\mathcal V_1,
 &(\theta+2g)\mathcal V_1=2z\mathcal U_2,\\
 (\theta+g)\mathcal U_2=3z\mathcal H-\mathcal V_2,
 &(\theta+2g)\mathcal V_2=-z\mathcal U_0.
\end{array}
\end{equation}
We put
\[
 P_0(T)=T(T+g)(T+2g)(T+3g),\qquad
 T_0(T)=(T+g-1)(T+g)(T+2g-1)(T+2g).
\]
Eliminating \(\mathcal U_0,\mathcal V_0,\mathcal D\) gives
\[
 P_0(\theta)\mathcal H
 =-2z(4\theta+1+6g)\mathcal U_1.
\]
Here we use
\[
 \mathcal V_0=\tfrac12(\theta+g)\theta\mathcal H,\qquad
 3\mathcal D=(\theta+2g)\mathcal V_0+z\mathcal U_1,\qquad
 (\theta+3g)\mathcal D=-z(\theta+g)\mathcal U_1.
\]
Eliminating \(\mathcal V_1,\mathcal U_2,\mathcal V_2\) gives
\[
 T_0(\theta)\mathcal U_1=-2z^2(4\theta+3+6g)\mathcal H.
\]
For this elimination we use
\[
 2z\mathcal U_2=-(\theta+2g)(\theta+g)\mathcal U_1,\qquad
 (\theta+2g)(\theta+g)\mathcal U_2=z(4\theta+3+6g)\mathcal H.
\]
Apply \(T_0(\theta-1)\) to the first eliminated equation and use \(\theta z=z(\theta+1)\).
We obtain \eqref{eq:scalar-product}, since \(P_0(T)T_0(T-1)=P_c(T)\).
All these operations are polynomial identities in \(c,z,\theta\).
\end{proof}

\subsection{Two initial products}
We put
\begin{align*}
 \boldsymbol B(c)&=
 \left(c,\frac c3,\frac{c+1}3,\frac{c+2}3,
       \frac{2c-1}3,\frac{2c}3,\frac{2c+1}3\right),\\
 \boldsymbol A(c)&=\left(\frac{2c-1}4,\frac{2c+1}4\right),
 \qquad K=\frac{64}{729}.
\end{align*}
We use Henrici's formula under the following restrictions on its parameter.
\begin{fact}\label{fact:henrici}
Suppose that \(c\in\Z_{>0}\), or that \(c\notin\Z\cup(\tfrac12+\Z)\).
Then
\begin{equation}\label{eq:henrici}
 \prod_{j=0}^2{}_0F_1(;c;\omega^jz)
 ={}_2F_7\left(\begin{matrix}\boldsymbol A(c)\\
                         \boldsymbol B(c)\end{matrix};Kz^3\right).
\end{equation}
\end{fact}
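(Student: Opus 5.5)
The plan is to show that both sides of \eqref{eq:henrici} are power series in \(z^3\) that satisfy the same scalar equation, and that this equation has a one-dimensional space of such solutions with constant term \(1\).

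First, the left side is a power series in \(z^3\). Replacing \(z\) by \(\omega z\) permutes the three factors cyclically, so only exponents divisible by \(3\) occur, and the constant term is \(1\). With \(y_j={}_0F_1(;c;\omega^jz)\), which solves \eqref{eq:common-bessel}, Lemma~\ref{lem:product-equation} shows that \(\mathcal H\) satisfies \eqref{eq:scalar-product}.

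Write \(\mathcal H=\sum_n h_n z^{3n}\) and use \(\theta z^{3n}=3n z^{3n}\). The equation becomes the recursion
\[
 P_c(3n)\,h_n=64\Bigl(3n-3+\tfrac{3(2c-1)}4\Bigr)\Bigl(3n-3+\tfrac{3(2c+1)}4\Bigr)h_{n-1}.
\]

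Next, expand \(P_c(3n)\):
\[
 P_c(3n)=3n\,(3n+3c-3)\,(3n+c-1)(3n+c-2)(3n+c-3)\,(3n+2c-2)(3n+2c-3)(3n+2c-4).
\]
Each factor pulls out a power of \(3\), giving
\[
 P_c(3n)=3^8\, n\,(n-1+c)\prod_{k=0}^{2}\Bigl(n-1+\tfrac{c+k}{3}\Bigr)\prod_{k=-1}^{1}\Bigl(n-1+\tfrac{2c+k}{3}\Bigr).
\]
These are exactly \(n\) times the factors \((v+n-1)\) for \(v\in\boldsymbol B(c)\). The right side equals
\[
 64\cdot 3^2\cdot 4^{-2}\cdot 4^2\,\Bigl(n-1+\tfrac{2c-1}{4}\Bigr)\Bigl(n-1+\tfrac{2c+1}{4}\Bigr),
\]
so its constant is \(64\cdot 9\) and its factors are \((u+n-1)\) for \(u\in\boldsymbol A(c)\). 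The ratio of constants is \(64\cdot9/3^8=64/729=K\). Hence the recursion is precisely the term ratio of the \({}_2F_7\) series in \eqref{eq:henrici}.

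The hypothesis on \(c\) enters only here. We need \(P_c(3n)\neq0\) for every \(n\geq1\), so that the recursion determines \(h_n\) uniquely from \(h_0=1\).
\begin{itemize}
\item The factor \(n\) is nonzero for \(n\geq1\).
\item The factor \(n-1+c\) vanishes only if \(c\in\Z_{\leq0}\).
\item A factor \(n-1+(c+k)/3\) vanishes only if \(c\in\Z\) with \(c\leq -k\).
\item A factor \(n-1+(2c+k)/3\) vanishes only if \(2c\in\Z\), and for \(c\in\Z_{>0}\) it is positive.
\end{itemize}
If \(c\notin\Z\cup(\tfrac12+\Z)\), then \(c\notin\Z\) and \(2c\notin\Z\), so none of these vanish. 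If \(c\in\Z_{>0}\), every factor is positive. In either case the entries of \(\boldsymbol B(c)\) avoid \(\Z_{\leq0}\), so the \({}_2F_7\) series is well defined.

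Finally, both sides are series in \(z^3\) with constant term \(1\) obeying the same uniquely solvable recursion, so they agree coefficientwise. The \({}_2F_7\) series is entire because \(p<q+1\), so the identity holds as entire functions. The only delicate step is this nonvanishing check, which is where the restriction on \(c\) is used; the remaining algebra is routine.
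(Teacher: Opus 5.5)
Your proposal is correct and is essentially the paper's argument. The paper justifies \eqref{eq:henrici} mainly by citing Henrici's theorem (in the Choi--Wang--Rathie form with $6c$ replaced by $c$), but it also says the identity can be verified by comparing coefficients in \eqref{eq:scalar-product}, using the invariance under $z\mapsto\omega z$ and the constant coefficient $1$; your proof carries out that sketch in full, and your factorization $P_c(3n)=3^8\,n\prod_{v\in\boldsymbol B(c)}(v+n-1)$ shows that the hypothesis on $c$ is used exactly to make the recursion uniquely solvable, i.e.\ to keep every entry of $\boldsymbol B(c)$ out of $\Z_{\leq0}$.
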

\begin{proof}
We use Henrici's triple product theorem \cite{Henrici1987}, in the form \cite[Eq.~(1.7)]{ChoiWangRathie2013Triple} with its parameter \(6c\) replaced by \(c\).
Under the stated restrictions, every lower parameter in \(\boldsymbol B(c)\) avoids nonpositive integers.
We can also verify \eqref{eq:henrici} by comparing coefficients in \eqref{eq:scalar-product} and using the invariance under \(z\mapsto\omega z\) and the constant coefficient \(1\).
\end{proof}

For the second product, we put
\begin{align*}
 \gamma_0(c)&=1,\qquad
 \gamma_1(c)=\frac{2(c-1)}{c(2-c)},\\
 \gamma_2(c)&=\frac{(c-1)(2c+3)}{c^2(c-3)(c+1)},\\
 \boldsymbol A_r(c)&=
 \left(\frac{2c+1+4r}{12},\frac{2c+7+4r}{12}\right)
 \qquad(r=0,1,2),
\end{align*}
and
\begin{align*}
 \boldsymbol B_0(c)&=
 \left(\frac13,\frac23,\frac c3,\frac{c+1}3,\frac{c+2}3,
                  \frac{4-c}3,\frac{2c+1}3\right),\\
 \boldsymbol B_1(c)&=
 \left(\frac23,\frac43,\frac{c+1}3,\frac{c+2}3,\frac{c+3}3,
                  \frac{5-c}3,\frac{2c+2}3\right),\\
 \boldsymbol B_2(c)&=
 \left(\frac43,\frac53,\frac{c+2}3,\frac{c+3}3,\frac{c+4}3,
                  \frac{6-c}3,\frac{2c+3}3\right).
\end{align*}
The scalar equation gives the following identity with lower parameter \(2-c\).
\begin{proposition}\label{prop:reflected-product}
Suppose that \(c\notin\Z\cup(\tfrac12+\Z)\).
Then
\begin{equation}\label{eq:reflected-product}
 {}_0F_1(;2-c;z){}_0F_1(;c;\omega z){}_0F_1(;c;\omega^2z)
 =\sum_{r=0}^2\gamma_r(c)z^r{}_2F_7
 \left(\begin{matrix}\boldsymbol A_r(c)\\
                     \boldsymbol B_r(c)\end{matrix};Kz^3\right).
\end{equation}
Every lower parameter on the right avoids \(\Z_{\leq0}\).
Moving \(2-c\) to position \(j\), with positions numbered \(0,1,2\), multiplies the summand of residue \(r\) by \(\omega^{jr}\).
\end{proposition}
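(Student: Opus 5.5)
We plan to derive \eqref{eq:reflected-product} from the scalar equation of Lemma~\ref{lem:product-equation}, in the same way as the coefficient-comparison proof of Fact~\ref{fact:henrici}. The first observation is that \({}_0F_1(;2-c;z)\) and \(z^{1-c}{}_0F_1(;2-c;z)\) are not the relevant pair. Instead, note that \(y=z^{1-c}{}_0F_1(;2-c;z)\) solves \(\theta(\theta+g)y=zy\) with \(g=c-1\). Therefore
\[
 y_0=z^{1-c}{}_0F_1(;2-c;z),\qquad y_j={}_0F_1(;c;\omega^jz)\quad(j=1,2)
\]
are admissible solutions of \eqref{eq:common-bessel}. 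Lemma~\ref{lem:product-equation} then shows that \(\mathcal H=z^{1-c}\Phi\), where \(\Phi\) is the left side of \eqref{eq:reflected-product}, is annihilated by the operator in \eqref{eq:scalar-product}. Conjugating by \(z^{1-c}\), that is, replacing \(\theta\) by \(\theta+1-c\), gives a scalar equation for the entire function \(\Phi\):
\[
 \bigl[P_c(\theta+1-c)-64z^3(\theta+1-c+\tfrac{3(2c-1)}4)(\theta+1-c+\tfrac{3(2c+1)}4)\bigr]\Phi=0 .
\]
Since \(\Phi\) is a power series, this is a three-term-free recurrence \(P_c(N+1-c)\phi_N=64\,(\cdots)\phi_{N-3}\) on its coefficients. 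It splits \(\Phi\) into the three residue classes \(N\equiv r\pmod 3\).

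Next, we factor the indicial polynomial. We have
\[
 P_c(T+1-c)=(T+1-c)(T+2c-2)\prod_{j=1}^3(T+1-j)\prod_{j=2}^4(T+c+1-j),
\]
and the linear factors are \(T-c+1,\ T+2c-2,\ T,\ T-1,\ T-2,\ T+c-1,\ T+c-2,\ T+c-3\). For \(N=3n+r\), we divide each factor by \(3\) and write the ratio \(\phi_{3n+r}/\phi_{3n+r-3}\) as a quotient of products of \((n+\text{const})\). This should reproduce \(\boldsymbol A_r(c)\) upstairs, with \(64/3^6=K\) times the factors \(\frac{2c+1+4r}{12}\) and \(\frac{2c+7+4r}{12}\). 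Downstairs it should reproduce \(\boldsymbol B_r(c)\) together with the factor \(n\) from \(n!\). For example, with \(r=0\): \(T\to n\) gives \(n!\); \(T-1,T-2\to n-\frac13,n-\frac23\); the three \(T+c-k\) give \(\frac{c}{3},\frac{c+1}{3},\frac{c+2}{3}\) after the shift \(n\mapsto n-1\) bookkeeping; \(T-c+1\) gives \(\frac{4-c}3\); and \(T+2c-2\) gives \(\frac{2c+1}3\). We check that each residue class begins at \(N=r\) (with \(\phi_r\) as initial datum) and that no earlier term can feed it. This requires that \(P_c(N+1-c)\neq0\) for \(N\ge3\) under \(c\notin\Z\cup(\frac12+\Z)\), and that the lower parameters in each \(\boldsymbol B_r(c)\) avoid \(\Z_{\le0}\) under the same hypothesis; both are direct inspections. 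The zeros at \(N=0,1,2\) from \(T,T-1,T-2\) are exactly what allow three free initial coefficients.

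Then \(\gamma_r(c)=\phi_r\) is read off from the Taylor coefficients of \(\Phi\) in degrees \(0,1,2\). The degree-\(0\) coefficient is \(1\). The degree-\(1\) coefficient is \(\frac1{2-c}+\frac{\omega+\omega^2}{c}=\frac{1}{2-c}-\frac1c=\frac{2(c-1)}{c(2-c)}\). The degree-\(2\) coefficient is a similar short computation, which we expect to match \(\gamma_2(c)\); this is the main routine check and the step most likely to hide an arithmetic slip. Finally, moving \(2-c\) to position \(j\) amounts to substituting \(z\mapsto\omega^{j}z\) in the left side, since the set \(\{1,\omega,\omega^2\}\) is permuted. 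On the right this multiplies \(z^r\) by \(\omega^{jr}\) and leaves \(z^3\) fixed, which gives the last assertion.
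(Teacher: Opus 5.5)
Your proposal is correct and follows the paper's proof. It uses the same choice \(y_0=z^{1-c}{}_0F_1(;2-c;z)\), \(y_j={}_0F_1(;c;\omega^jz)\) for \(j=1,2\) in Lemma~\ref{lem:product-equation}, a two-term recurrence in steps of \(3\) read off from the roots of \(P_c\), the values \(\gamma_r(c)\) from the first three Taylor coefficients, and the substitution \(z\mapsto\omega^jz\) for the last assertion; conjugating by \(z^{1-c}\) instead of substituting \(z^{1-c+r}\sum_n v_nz^{3n}\) into the equation for \(\mathcal H\) is only a change of bookkeeping, and the degree-\(2\) coefficient does simplify to \(\gamma_2(c)=(c-1)(2c+3)/(c^2(c-3)(c+1))\).
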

\begin{proof}
We take
\[
 y_0=z^{1-c}{}_0F_1(;2-c;z),\qquad
 y_1={}_0F_1(;c;\omega z),\qquad
 y_2={}_0F_1(;c;\omega^2z).
\]
The three functions satisfy \eqref{eq:common-bessel}.
Thus \(z^{1-c}\) times the product on the left of \eqref{eq:reflected-product} satisfies \eqref{eq:scalar-product}.
The roots of \(P_c\) are
\[
 0,\quad 3-3c,\quad 1-c,2-c,3-c,\quad
 2-2c,3-2c,4-2c.
\]
Substitute \(z^{1-c+r}\sum_{n\geq0}v_nz^{3n}\) in that equation.
For \(n\geq1\), comparison of coefficients gives
\[
 \frac{v_n}{v_{n-1}}
 =K\,\frac{(\boldsymbol A_r(c)+n-1)_1}
              {n(\boldsymbol B_r(c)+n-1)_1}.
\]
None of the listed lower parameters is a nonpositive integer: such an entry would force \(c\) to be an integer or a half-integer.
The recurrence therefore determines the series from \(v_0\).
Direct multiplication of the first three terms of the three \({}_0F_1\) series gives \(\gamma_0(c),\gamma_1(c),\gamma_2(c)\).
Both sides are entire in \(z\).
Cyclic permutation of the factors is the substitution \(z\mapsto\omega^jz\), which gives the last assertion.
\end{proof}

\subsection{Independent integer shifts}
We use the following recovery of all eight components from the product.
\begin{lemma}\label{lem:recovery}
Write \eqref{eq:eight-system} as \(\theta Y=M(z)Y\), in the order
\[
 Y={}^t(\mathcal H,\mathcal U_0,\mathcal V_0,\mathcal D,
                \mathcal U_1,\mathcal V_1,\mathcal U_2,\mathcal V_2).
\]
We put \(v_0=(1,0,\ldots,0)\), \(v_{k+1}=zv_k'+v_kM\), and let \(\mathcal C\) have rows \(v_0,\ldots,v_7\).
Then
\begin{equation}\label{eq:recovery-determinant}
 \det\mathcal C
 =768z^6(2g-5)(2g-1)(2g+1)(2g+5).
\end{equation}
If this determinant is not identically zero, each \(E_S\) is a linear combination of \(\theta^k\mathcal H\), \(0\leq k\leq7\), with coefficients that are Laurent polynomials and depend rationally on \(c,\omega\).
\end{lemma}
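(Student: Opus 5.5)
The plan has two parts: compute \(\det\mathcal C\) directly from the system \eqref{eq:eight-system}, then invert \(\mathcal C\) to recover the eight components.

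\textbf{Setting up \(\mathcal C\).} First I write out the \(8\times8\) matrix \(M(z)\) from \eqref{eq:eight-system}. In the order \(Y\) above, each equation expresses \(\theta\) of one component as a combination of components. The entries are linear in \(g\) and monomials in \(z\) of degree at most \(1\), for example
\[
\theta\mathcal V_0=-2g\mathcal V_0+3\mathcal D-z\mathcal U_1 .
\]
Since \(\theta(v_kY)=(\theta v_k)Y+v_kMY\), induction on \(k\) gives \(\theta^k\mathcal H=v_kY\). Hence \(\mathcal C Y={}^t(\mathcal H,\theta\mathcal H,\ldots,\theta^7\mathcal H)\).

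\textbf{Computing the determinant.} I would evaluate \(\det\mathcal C\) using weights. Give \(z\) weight \(3\), and give the components weights \(0,0,0,0\) for \(\mathcal H,\mathcal U_0,\mathcal V_0,\mathcal D\), and \(1,1,2,2\) for \(\mathcal U_1,\mathcal V_1,\mathcal U_2,\mathcal V_2\). Under this grading the system is homogeneous modulo \(3\); it is the grading by the \(z\mapsto\omega z\) symmetry.

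From this I expect \(\det\mathcal C\) to be a single monomial in \(z\) times a polynomial in \(g\). Each row \(v_k\) has entries that are polynomials in \(z\) whose exponents are constrained modulo \(3\) column by column. Combined with a degree count, this pins the \(z\)-power to \(z^6\).

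The \(g\)-polynomial can then be obtained by explicit expansion. The rows are short: \(v_1=(0,1,0,\ldots)\), and \(v_2=(0,-g,2,0,\ldots)\). The entries in the \(\mathcal U_1,\mathcal V_1,\mathcal U_2,\mathcal V_2\) columns first appear at \(v_3\) or \(v_4\), carrying factors of \(z\). I would reduce by column operations that eliminate the first four columns, leaving a \(4\times4\) minor in the last four columns. This minor is essentially the matrix relating \((\mathcal U_1,\mathcal V_1,\mathcal U_2,\mathcal V_2)\) to the \(\theta\)-derivatives, and that relation is controlled by the elimination identities in the proof of Lemma~\ref{lem:product-equation}.

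The factors \(2g\pm1\) and \(2g\pm5\) should appear as \(4\theta+1+6g\) and \(4\theta+3+6g\) evaluated at the relevant exponents. This matches the \(A(c)\) parameters \((2c\pm1)/4\). Checking the constant \(768\) and the exact factorization is the main computational obstacle. I would verify it by a symbolic computation, or by evaluating at several numeric values of \(g\) and using the fact that the degree in \(g\) is at most \(4\) by the weight bookkeeping.

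\textbf{Recovery of the components.} If \(\det\mathcal C\not\equiv0\), then \(Y=\mathcal C^{-1}{}^t(\theta^k\mathcal H)_k\). The entries of \(\mathcal C\) are polynomials in \(z\) and \(g\), and the entries of \(M\) involve \(\omega\) through the definitions of \(\mathcal U_r,\mathcal V_r\). Since \(\det\mathcal C\) is \(z^6\) times a constant depending only on \(c\), the entries of \(\mathcal C^{-1}=\operatorname{adj}(\mathcal C)/\det\mathcal C\) are Laurent polynomials in \(z\) with coefficients rational in \(c\) and \(\omega\).

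Finally, each \(E_S\) is obtained by inverting the discrete Fourier transforms defining \(\mathcal U_r\) and \(\mathcal V_r\). For instance,
\[
E_{\{j\}}=\tfrac13\sum_r\omega^{-rj}\mathcal U_r ,
\]
and \(\mathcal H=E_\varnothing\) and \(\mathcal D=E_{\{0,1,2\}}\) are already components. This gives each \(E_S\) as a combination of the \(\theta^k\mathcal H\) with coefficients of the asserted form.
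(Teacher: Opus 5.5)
Your proposal is correct and follows the paper's argument: \(\theta^k\mathcal H=v_kY\) by induction, then a direct evaluation of \(\det\mathcal C\) from \eqref{eq:eight-system}, then the adjugate formula for \(\mathcal C^{-1}\) and inversion of the discrete Fourier sums defining \(\mathcal U_r,\mathcal V_r\); the paper only asserts the determinant, but the rows \(v_0,v_1,v_2\) are triangular in the first three columns with diagonal \(1,1,2\), and the remaining \(5\times5\) block does expand to the stated value. Only your weight shortcut needs repair, together with one small inaccuracy:

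\begin{enumerate}
\item With \(z\) of weight \(3\) the system is not graded; see \((\theta+2g)\mathcal V_0=3\mathcal D-z\mathcal U_1\). Giving \(z\) weight \(2\) modulo \(3\) yields the correct \(z\mapsto\omega z\) grading for your component weights.
\item This mod-\(3\) grading alone gives no bound on the degree in \(g\). For that, use the integer grading in which \(g\) has weight \(1\), \(z\) has weight \(2\), and \(\mathcal H,\mathcal U_0,\mathcal V_0,\mathcal D,\mathcal U_1,\mathcal V_1,\mathcal U_2,\mathcal V_2\) have weights \(0,1,2,3,1,2,1,2\). Then the entry of \(v_k\) in a column of weight \(w\) has weighted degree at most \(k-w\), so \(\det\mathcal C\) has weighted degree at most \(28-12=16\). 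Combined with the mod-\(3\) residues, this gives \(\det\mathcal C=z^6P(g)\) with \(\deg P\leq4\).
\item \(M\) has entries in \(\Z[g,z]\); \(\omega\) enters only in the final Fourier inversion, not through \(M\).
\end{enumerate}
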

\begin{proof}
The recurrence defining \(v_k\) gives \(\theta^k\mathcal H=v_kY\).
We evaluate the determinant using the eight entries of \eqref{eq:eight-system}, which gives \eqref{eq:recovery-determinant}.
The entries of \(\mathcal C\) are polynomials in \(z,g\).
Its inverse therefore has entries in \(\Q(g)[z,z^{-1}]\) whenever the four parameter factors are nonzero.
We recover the eight \(E_S\) by inverting the finite sums defining the components of \(Y\).
\end{proof}

We now give all steps of the finite construction.
\begin{proposition}\label{prop:construction}
Under \eqref{eq:main-assumptions} and \eqref{eq:integer-condition}, the identity \eqref{eq:finite-sum} exists.
We can take every nonzero summand to be an ordinary \({}_9F_8\) function with argument \(64\kappa^3\), multiplied by a nonnegative integer power of \(\kappa\).
The construction uses finitely many rational operations and integer shifts of parameters.
\end{proposition}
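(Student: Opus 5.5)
By Lemma~\ref{lem:coefficient-operations}, specifically \eqref{eq:forward-operation}, and by \eqref{eq:grouping}, it suffices to write the product \(\prod_{j=0}^2{}_0F_1(;c_j;\omega^jz)\) as a finite sum \(\sum z^h\,{}_pF_q(\cdots;Kz^3)\). Each summand must have \(h\geq0\), lower parameters outside \(\Z_{\leq0}\), and \(p\leq 3\), \(q\leq 8\). The forward operation then turns each summand into a \({}_9F_8\) (or lower) with argument \(729Kz^3\mapsto 64\kappa^3\), and Remark~\ref{rem:coefficient-convergence} gives convergence on \(|\kappa|<1/4\). Write the condition in the form \eqref{eq:signed-shifts} with \(c=c_0\), \(g=c-1\). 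Each factor \({}_0F_1(;c_j;\omega^jz)\) is then, up to the factor \(z^{(1-c_j)}\) and an integer shift, a solution of the common equation \eqref{eq:common-bessel}.

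\textbf{Step 1 (base products).} Take \(y_j={}_0F_1(;c;\omega^jz)\) when \(\varepsilon_j=1\). Take \(y_j=(\omega^jz)^{1-c}{}_0F_1(;2-c;\omega^jz)\) when \(\varepsilon_j=-1\). If \(c\notin\Z\), the signs \(\varepsilon\) are genuinely defined. Fact~\ref{fact:henrici} and Proposition~\ref{prop:reflected-product}, together with the cyclic-position rule, express \(\mathcal H=y_0y_1y_2\), up to a power \(z^{\mu(1-c)}\), as a finite sum of \({}_2F_7(\cdots;Kz^3)\) times \(z^r\). This covers the case of one reflected factor, and the case of two reflected factors via the symmetry \(c\leftrightarrow 2-c\) (apply Proposition~\ref{prop:reflected-product} with base \(2-c\)). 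All reflected cases share the same prefactor \(z^{\text{const}}\), which we strip at the end. The case \(c\in\Z_{>0}\) is the simplest, since then all \(c_j\in\Z\) and no reflection is needed; Fact~\ref{fact:henrici} applies directly.

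\textbf{Step 2 (integer shifts).} The contiguous relations for \({}_0F_1\) are \(\theta\,{}_0F_1(;c;x)=(c-1)({}_0F_1(;c-1;x)-{}_0F_1(;c;x))\) and \({}_0F_1(;c+1;x)\) expressed through \(\theta\,{}_0F_1(;c;x)/x\). Iterating them shows that each shifted factor \({}_0F_1(;c_j;\omega^jz)\) equals \(a_j(z)y_j+b_j(z)\theta y_j\), with \(a_j,b_j\in\C(c)[z,z^{-1}]\) (after removing the \(z^{1-c}\) factor). Expanding the product gives a Laurent-polynomial combination of the eight \(E_S\). Lemma~\ref{lem:recovery} rewrites each \(E_S\) through \(\theta^k\mathcal H\), \(0\leq k\leq 7\). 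This is legitimate because \(2g\pm1,2g\pm5\neq0\), which follows from \(c\notin\frac12+\Z\). Now \(\theta^k\) applied to \(z^r{}_2F_7(\boldsymbol A;\boldsymbol B;Kz^3)\) is a polynomial in \(\theta\) acting termwise, and it yields \(z^r{}_3F_8\) by inserting upper \(\beta\) and lower \(\beta-1\)-type parameters. Explicitly, \((n+\beta)\) becomes \((\beta+1)_n/(\beta)_n\). One can arrange the factorization over \(\C\) of the polynomial in \(3n+r\) linearly, so each term gives one extra upper/lower pair. Multiplying by \(z^{m}\) with \(m\in\Z\) and regrouping by residue mod \(3\) gives a finite sum of \(z^h{}_3F_8(\cdots;Kz^3)\), except that \(h\) may be negative.

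\textbf{Step 3 (negative powers) — the main obstacle.} The left side is entire with constant term \(1\). Hence the total coefficient of every negative power of \(z\) vanishes, so terms with negative \(h\) can be removed. For a summand \(z^{h}\sum_n v_nz^{3n}\) with \(h<0\), we split off the finitely many initial terms with \(h+3n<0\); these cancel across summands. The tail is rewritten as \(z^{h+3n_0}\,v_{n_0}\,{}_pF_q\) with parameters shifted by \(n_0\). The delicate point is to ensure that, after shifting, no lower parameter lands in \(\Z_{\leq0}\) and no inserted parameter \(\beta\) is degenerate: the denominators \(\beta\) coming from Step 2 must be nonzero. I expect to handle this by checking that all lower parameters have the form \((\pm c+\text{integer})/3\), \((2c+\text{integer})/3\), or rationals with denominator \(3\). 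Such a parameter can be a nonpositive integer only if \(c\in\Z\cup(\frac12+\Z)\), or in a case that is resolved by the positive index shift. In degenerate instances, where some \(\beta\) coincides with a nonpositive integer, I would instead keep the polynomial factor and expand it into monomials, as allowed in Remark~\ref{rem:scope}. Finally, apply \eqref{eq:forward-operation}, and absorb \(729K=64\) and the normalizing constants into \(A_\ell\). Any coincident upper/lower parameters cancel to give the \({}_8F_7\) cases.
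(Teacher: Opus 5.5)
Your overall architecture matches the paper's: Henrici's formula and Proposition~\ref{prop:reflected-product} as base products, then contiguous relations, Lemma~\ref{lem:recovery}, deletion of negative powers using entireness, and finally \eqref{eq:forward-operation}. The gap is in the step that turns \(Q(\theta)\) applied to \(z^r{}_2F_7(\boldsymbol A;\boldsymbol B;Kz^3)\) into hypergeometric form, and it fails as written for three reasons.

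\textbf{Parameter count.} Factoring \(Q(3n+r)\) into linear factors \((n+\beta_i)\) inserts one pair \((\beta_i+1;\beta_i)\) per root. A polynomial of degree \(k\) therefore gives \({}_{2+k}F_{7+k}\), not \({}_3F_8\).

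\textbf{Degenerate lower parameters.} The \(\beta_i\) are roots of polynomials whose coefficients come from \(\mathcal C^{-1}\) and the contiguous recurrences. Your proposed check (parameters of the form \((\pm c+m)/3\)) applies to the lists \(\boldsymbol B(c)\) and \(\boldsymbol B_r(c)\), not to these roots. Nothing prevents some \(\beta_i\in\Z_{\le0}\), for instance if \(Q\) vanishes at a nonnegative index. In that case \((\beta_i+1)_n/(\beta_i)_n\) is undefined. Your fallback via Remark~\ref{rem:scope} misreads that remark. It concerns polynomial factors in \(\kappa\), whereas a polynomial in the summation index is the differential operator \(Q(\theta)\) itself, which is exactly what still has to be converted.

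\textbf{The tail pair.} Re-indexing a tail \(\sum_{n\ge n_0}\) with \(m=n-n_0\) gives \(1/n!=\frac{1}{n_0!}\frac{(1)_m}{(n_0+1)_m\,m!}\). This requires the extra pair \((1;n_0+1)\), which your count \(p\le3\), \(q\le8\) omits. With even one pair from Step~2 you reach \({}_4F_9\), hence \({}_{10}F_9\) after \eqref{eq:forward-operation}. So the \({}_9F_8\) assertion is not obtained.

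The missing idea is never to multiply coefficients by polynomials in the index. Put \(x=Kz^3\) and use \(z^{-r}\theta z^r=r+3x\,d/dx\). Every polynomial in \(x\,d/dx\) is a finite combination of \(x^j(d/dx)^j\), and
\(x^j(d/dx)^j{}_2F_7(\boldsymbol A;\boldsymbol B;x)=x^j\frac{(\boldsymbol A)_j}{(\boldsymbol B)_j}{}_2F_7(\boldsymbol A+j;\boldsymbol B+j;x)\).
Hence only \({}_2F_7\) series with lower lists \(\boldsymbol B+j\), \(j\ge0\), ever occur. These avoid \(\Z_{\le0}\) automatically, and the factor \(x^j\) merely shifts the exponent \(e\). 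Tail deletion then contributes exactly one pair \((1;s+1)\), giving \({}_3F_8\) and, after \eqref{eq:forward-operation}, \({}_9F_8\).

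A repair inside your own framework is also possible. Expand \(Q\) at index \(n_0+m\) in the rising-factorial basis \((m+1)_j\), rather than factoring it. The identity \((m+1)_j/(n_0+m)!=\frac{j!}{n_0!}\frac{(j+1)_m}{(n_0+1)_m\,m!}\) merges the Step~2 pair and the tail pair into a single harmless pair \((j+1;n_0+1)\).
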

\begin{proof}
We consider two cases for \(c_0\).

First suppose that \(c_0\notin\Z\).
We put \(c=c_0\) and write the three lower parameters as in \eqref{eq:signed-shifts}.
For \(\delta\in\{c,2-c\}\), contiguous relations give
\begin{equation}\label{eq:contiguous-operators}
 {}_0F_1(;\delta+m;x)
 =A_m(x){}_0F_1(;\delta;x)
    +B_m(x)\theta_x{}_0F_1(;\delta;x),
 \qquad\theta_x=x\frac{d}{dx},
\end{equation}
where \(A_m,B_m\in\Q(\delta)[x,x^{-1}]\).
We start with \(A_0=1,B_0=0\).
For \(\gamma\in\C\) with \(\gamma,\gamma-1\notin\Z_{\leq0}\), the series definition gives
\[
 {}_0F_1(;\gamma+1;x)=\frac{\gamma}{x}\theta_x{}_0F_1(;\gamma;x),
 \qquad
 {}_0F_1(;\gamma-1;x)
 =\left(1+\frac{\theta_x}{\gamma-1}\right){}_0F_1(;\gamma;x).
\]
Using \(\theta_x^2{}_0F_1(;\delta;x)=x{}_0F_1(;\delta;x)+(1-\delta)\theta_x{}_0F_1(;\delta;x)\), we obtain
\begin{align}\label{eq:shift-recurrences}
 A_{m+1}&=\frac{\delta+m}{x}(\theta_xA_m+xB_m),\notag\\
 B_{m+1}&=\frac{\delta+m}{x}
       (A_m+\theta_xB_m+(1-\delta)B_m),\notag\\
 A_{m-1}&=A_m+\frac{\theta_xA_m+xB_m}{\delta+m-1},\notag\\
 B_{m-1}&=B_m+
       \frac{A_m+\theta_xB_m+(1-\delta)B_m}{\delta+m-1}.
\end{align}
Since \(\delta\) is not an integer, all denominators required by these recurrences are nonzero.

We put \(d=\#\{j:\varepsilon_j=-1\}\) and choose
\[
 y_j=
 \begin{cases}
 {}_0F_1(;c;\omega^jz),&\varepsilon_j=1,\\
 z^{-g}{}_0F_1(;2-c;\omega^jz),&\varepsilon_j=-1.
 \end{cases}
\]
These functions satisfy \eqref{eq:common-bessel}.
Their product is \(z^{-dg}\prod_j{}_0F_1(;1+\varepsilon_jg;\omega^jz)\).
When \(\varepsilon_j=-1\), \eqref{eq:contiguous-operators} expresses the shifted function through \(z^gy_j\) and \(z^g\theta y_j\), with \(A_m\) replaced by \(A_m+gB_m\).
Multiplying the three shifted factors gives a linear combination of \(z^{dg}E_S\) with Laurent polynomials as coefficients.
The parameter factors in \eqref{eq:recovery-determinant} are nonzero under our assumption on \(c\).
By Lemma~\ref{lem:recovery}, we can express the shifted product as a finite linear combination of the following functions, with Laurent polynomials as coefficients:
\[
 z^{dg}\theta^k\mathcal H
 =(\theta-dg)^k
       \prod_{j=0}^2{}_0F_1(;1+\varepsilon_jg;\omega^jz).
\]

If \(d=0\) or \(d=3\), we use \eqref{eq:henrici} with parameter \(c\) or \(2-c\).
If \(d=1\), we use \eqref{eq:reflected-product} and its cyclic permutations.
If \(d=2\), replacement of \(c\) by \(2-c\) reduces the initial product to the case \(d=1\).
All lower parameters in these initial products avoid nonpositive integers.
Under \(x=Kz^3\), conjugation by \(z^r\) gives
\[
 z^{-r}\theta z^r=r+3x\frac{d}{dx}
\]
as an identity of operators on functions of \(x\).
Every polynomial in \(x\,d/dx\) is a finite linear combination of the operators \(x^j(d/dx)^j\).
Thus polynomial expressions in \(\theta\) reduce to finitely many ordinary derivatives of the displayed \({}_2F_7\) series.
The differentiation formula \cite[Eq.~16.3.1]{DLMF} shifts every upper and lower parameter by the same nonnegative integer.
Thus we obtain a finite sum of terms
\begin{equation}\label{eq:laurent-summand}
 C_0z^e{}_2F_7
 \left(\begin{matrix}\boldsymbol A\\\boldsymbol B\end{matrix};Kz^3\right),
 \qquad e\in\Z,
\end{equation}
where \(\boldsymbol A\) has two entries, and \(\boldsymbol B\) has seven entries, none of which is a nonpositive integer.
The constant \(C_0\) and both lists come from the finite recurrences and the matrix \(\mathcal C^{-1}\).

For each term \eqref{eq:laurent-summand}, we put
\[
 s=\max(0,\lceil-e/3\rceil),\qquad h=e+3s\geq0,\qquad
 C_s=C_0\frac{K^s(\boldsymbol A)_s}{s!(\boldsymbol B)_s}.
\]
After deleting its terms with negative powers of \(z\), that term becomes
\begin{equation}\label{eq:tail-removal}
 C_sz^h{}_3F_8
 \left(\begin{matrix}1,\boldsymbol A+s\\
                     s+1,\boldsymbol B+s\end{matrix};Kz^3\right).
\end{equation}
The deleted finite Laurent polynomials sum to zero, since the original shifted product is entire.
Every entry of \(\boldsymbol B+s\) avoids nonpositive integers.
Apply \eqref{eq:forward-operation} to \eqref{eq:tail-removal}.
Its contribution is
\begin{equation}\label{eq:constructed-nine}
 C_s(a)_h(b)_h\kappa^h{}_9F_8
 \left(\begin{matrix}
 1,\boldsymbol A+s,\Delta_h(a),\Delta_h(b)\\
 s+1,\boldsymbol B+s
 \end{matrix};64\kappa^3\right).
\end{equation}
This proves the assertion when \(c_0\notin\Z\).

Next suppose that \(c_0\in\Z\).
Condition \eqref{eq:integer-condition} implies that every \(c_j\) is a positive integer, since the lower parameters avoid nonpositive integers.
We choose a positive integer \(C\geq\max_j c_j\).
We start with the product in \eqref{eq:henrici} with all three lower parameters equal to \(C\), and use only downward shifts \(m_j=c_j-C\).
All denominators in the downward recurrences \eqref{eq:shift-recurrences} are positive integers.
The determinant \eqref{eq:recovery-determinant} is nonzero for \(g=C-1\).
The entries of \(\boldsymbol B(C)\) are positive, as are all their nonnegative shifts.
We again recover the components, differentiate the resulting series, and delete the terms with negative powers.
Formula \eqref{eq:constructed-nine} again gives target functions whose lower parameters avoid nonpositive integers.

In both cases, the target \({}_9F_8\) series converge on \(|64\kappa^3|<1\).
Coefficient comparison with \eqref{eq:grouping} proves equality as germs.
The resulting finite sum gives the stated continuation on \(|\kappa|<1/4\).
\end{proof}

\section{Bessel equations and their isomorphisms}
\label{sec:bessel}
In this section, we determine the differential Galois groups and isomorphisms needed for the necessary direction.
All differential fields have constant field \(\C\).

\subsection{The normal form}
For \(\lambda\ne0\), we write \(B_{\lambda,\nu}\) for the differential module associated with
\begin{equation}\label{eq:bessel-normal}
 u''=\left(\lambda+\frac{\nu^2-1/4}{t^2}\right)u,
 \qquad {}'=\frac{d}{dt}.
\end{equation}
If \(\nu=c-1\), direct substitution gives
\begin{equation}\label{eq:normalization}
 u=t^{c-1/2}{}_0F_1\left(;c;\frac{\lambda t^2}{4}\right)
\end{equation}
as a solution of \eqref{eq:bessel-normal}.
The power of \(t\) denotes any local branch on a simply connected domain avoiding \(0\).
The differential module of that power has rank \(1\) over \(\C(t)\).

We give the formal expansions at infinity explicitly.
\begin{lemma}\label{lem:bessel-formal}
Choose \(s\in\C^\times\) with \(s^2=\lambda\).
Equation \eqref{eq:bessel-normal} has formal solutions
\begin{equation}\label{eq:bessel-formal}
 u_\pm=e^{\pm st}\left(1+\sum_{n\geq1}a_n^\pm t^{-n}\right).
\end{equation}
We put \(q=\nu^2-1/4\) and \(a_0^\pm=1\).
The coefficients satisfy
\[
 a_{n+1}^\pm=
 \frac{n(n+1)-q}{\pm2s(n+1)}a_n^\pm
 \qquad(n\geq0).
\]
In particular, the series multiplying the exponentials contain no nonintegral powers of \(t\).
\end{lemma}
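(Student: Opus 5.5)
We substitute the ansatz \eqref{eq:bessel-formal} directly into \eqref{eq:bessel-normal} and compare coefficients of powers of \(t\). Write \(u=e^{\varepsilon st}v\) with \(\varepsilon=\pm1\) and \(v=\sum_{n\geq0}a_nt^{-n}\), \(a_0=1\). Then
\[
 u''=e^{\varepsilon st}\bigl(v''+2\varepsilon sv'+s^2v\bigr).
\]
Since \(s^2=\lambda\), the term \(s^2v\) cancels against \(\lambda v\). The equation becomes
\[
 v''+2\varepsilon s v'=\frac{q}{t^2}v .
\]
This reduced equation is what we solve termwise in \(\C[[t^{-1}]]\).

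Next we compute each term on the series. We have \(v'=-\sum_{n\geq1}na_nt^{-n-1}\) and \(v''=\sum_{n\geq1}n(n+1)a_nt^{-n-2}\). The coefficient of \(t^{-n-2}\) for \(n\geq0\) gives
\[
 n(n+1)a_n-2\varepsilon s(n+1)a_{n+1}=qa_n .
\]
Here the \(n=0\) term of \(v''\) is zero, consistent with the formula. The coefficient of \(t^{-1}\) does not arise, since \(v'\) starts at \(t^{-2}\).

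Solving for \(a_{n+1}\) yields exactly the stated recurrence
\[
 a_{n+1}=\frac{n(n+1)-q}{2\varepsilon s(n+1)}a_n .
\]
This is legitimate because \(s\neq0\) and \(n+1\neq0\). The recurrence therefore determines a unique formal series from \(a_0=1\), and reversing the computation shows that \(u_\pm\) is a formal solution.

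Finally, the ansatz uses only the integral powers \(t^{-n}\), and the recurrence closes within them. Hence no nonintegral powers, and no logarithms, occur in the factor multiplying \(e^{\pm st}\). This gives the last assertion.

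There is no real obstacle here. The only point to check is the indexing at \(n=0\) and the sign of the \(v'\) term, which produces the factor \(\pm2s(n+1)\) in the denominator. We might also remark that the two formal solutions are independent, since their exponential factors differ (\(s\neq0\)). This accounts for the full formal decomposition at \(t=\infty\) with exponential factors \(\pm st\), each of rank \(1\).
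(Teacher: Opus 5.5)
Your proposal is correct and follows the paper's proof essentially verbatim: substitute \(u=e^{\pm st}v\), reduce to \(v''\pm2sv'=qt^{-2}v\), and compare coefficients of \(t^{-n-2}\) to get the recurrence, with independence coming from the distinct exponentials. Your explicit check of the \(n=0\) indexing and of the sign of the \(v'\) term is a detail the paper leaves implicit.
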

\begin{proof}
Substitute \(u=e^{st}F\) in \eqref{eq:bessel-normal}.
The equation becomes \(F''+2sF'=qt^{-2}F\).
Comparison of the coefficient of \(t^{-n-2}\) gives the stated recurrence.
Replacement of \(s\) by \(-s\) gives the other solution.
Their distinct exponentials make them formally independent.
\end{proof}

\subsection{The differential Galois group}
For \(\nu\notin\tfrac12+\Z\), the differential Galois group of Bessel's equation is also recorded in \cite[Example~1.3.32]{Singer2009}.
We prove the version over \(\C(t)\) that we use.
\begin{proposition}\label{prop:bessel-group}
If \(\lambda\ne0\) and \(\nu\notin\tfrac12+\Z\), the differential Galois group of \eqref{eq:bessel-normal} over \(\C(t)\) is \(\SL_2(\C)\).
The differential module remains irreducible after every finite algebraic extension of \(\C(t)\).
\end{proposition}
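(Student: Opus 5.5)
The plan is to use the standard classification of algebraic subgroups of \(\SL_2(\C)\) together with the formal structure at \(t=\infty\) from Lemma~\ref{lem:bessel-formal} and the local structure at \(t=0\). First, equation \eqref{eq:bessel-normal} has no first-order term, so the Wronskian is constant. Hence the differential Galois group \(G\) lies in \(\SL_2(\C)\). By the classification, a proper algebraic subgroup of \(\SL_2(\C)\) is one of three kinds. It may be conjugate to a subgroup of the Borel group, in which case there is a solution \(u\) with \(u'/u\in\C(t)\). It may be conjugate to a subgroup of the infinite dihedral group, in which case there is a quadratic algebraic logarithmic derivative. Or it may be finite, in which case all solutions are algebraic. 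The task is to exclude each case, in the spirit of Kovacic's algorithm.

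\emph{Finite case.} At \(\infty\) the formal solutions \(u_\pm\) involve the genuine exponentials \(e^{\pm st}\) with \(s\ne0\). Hence the singularity at infinity is irregular and the equation has no nonzero algebraic solution, since an algebraic function has a Puiseux expansion, which has no exponential factor. So \(G\) is infinite.

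\emph{Reducible case.} Suppose \(u'/u=R\in\C(t)\). The only finite singularity is \(t=0\), with exponents \(\tfrac12\pm\nu\). Thus \(u=t^{\rho}P(t)e^{\pm st}\) with \(P\) a polynomial, \(\rho\in\{\tfrac12\pm\nu\}\), and the exponential factor forced by \(\infty\). Comparing with \eqref{eq:bessel-formal}, the factor \(t^{\rho}P(t)\) must equal \(1+\sum a_n^\pm t^{-n}\) up to a constant, so the formal series must terminate and \(\rho\) must be a nonnegative integer minus \(\deg P\). More precisely, the series truncates iff \(n(n+1)=q\) for some \(n\ge0\), i.e.\ \(\nu^2=(n+\tfrac12)^2\). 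This is excluded by \(\nu\notin\tfrac12+\Z\). So no such \(u\) exists.

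\emph{Dihedral case.} Here a Riccati solution is algebraic of degree \(2\), and the exponential parts of the solutions at \(\infty\) must be permuted by the group. Equivalently, the symmetric square has a rational logarithmic-derivative solution. I would apply the same local analysis to \(\operatorname{Sym}^2\). Its formal exponential factors at \(\infty\) are \(e^{2st},1,e^{-2st}\). A rational-type solution with trivial exponential factor would have to come from \(u_+u_-\), whose series \(\sum b_nt^{-n}\) must terminate after multiplication by \(t^{\rho}\), with \(\rho\in\{1\pm2\nu,1\}\). Alternatively, and more simply, the dihedral case forces the two exponential solutions \(e^{st}\) and \(e^{-st}\) to be exchanged under a ramified double cover. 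But the exponents \(\pm st\) are already integral in \(t\) (no \(t^{1/2}\)), so no ramification is present. This reduces the dihedral case to the reducible case over a quadratic extension unramified at \(\infty\). Making this rigorous, perhaps via the Kovacic criterion on the local data at \(0\) and \(\infty\) and again using \(\nu\notin\tfrac12+\Z\), is the step I expect to be the main obstacle. Hence \(G=\SL_2(\C)\).

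\emph{Irreducibility after extension.} Finally, after a finite algebraic extension \(F\supset\C(t)\), the Galois group becomes a finite-index algebraic subgroup of \(G\). Since \(\SL_2(\C)\) is connected, it has no proper finite-index subgroup, so the group over \(F\) is still \(\SL_2(\C)\). This group acts irreducibly on \(\C^2\), so by the Galois correspondence the module stays irreducible.
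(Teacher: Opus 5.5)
Your finite case, your reducible case and the final step on algebraic extensions are fine. In the reducible case, termination of \(1+\sum a_n^\pm t^{-n}\) is equivalent to the paper's residue count \(\rho+N=0\) for a rational Riccati solution. The dihedral case, which you flag yourself, is a genuine gap.

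Non-ramification at \(\infty\) of the quadratic Riccati solution \(h\) is correct, but it does not make \(\C(t)(h)\) trivial. The two lines are exchanged by monodromy around \emph{finite} points; at \(\infty\) the exponential torus fixes both of them. A double cover may be branched at \(0\) and at some \(a\ne0\), e.g.\ \(\C(t,\sqrt{t(t-a)})\). Your reducible-case argument uses that \(0\) is the only singularity and that \(P\) is a polynomial in \(t\), so it does not transfer to such a field.

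The missing idea is that the Riccati equation \(h'+h^2=\lambda+(\nu^2-\tfrac14)t^{-2}\) itself forbids ramification at every finite \(a\ne0\):
\begin{enumerate}
\item If a branch of \(h\) is finite at \(a\), the first fractional exponent \(k/e\) in its Puiseux expansion produces a term of exponent \(k/e-1\) in \(h'\). Nothing in \(h^2\) or in the right side can cancel it, so the branch is analytic.
\item At a pole, \(v=1/h\) satisfies \(v'=1-(\lambda+(\nu^2-\tfrac14)t^{-2})v^2\). The same argument makes \(v\) analytic with \(v'(a)=1\), so \(h\) has a simple pole with residue \(1\) and no ramification.
\end{enumerate}
Combined with your observation at \(\infty\), the cover defined by \(h\) is branched over at most one point, hence trivial. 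So \(h\in\C(t)\), and your reducible-case computation finishes the proof.

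This is what the paper does, and it treats the finite, reducible and dihedral groups at once. Every proper algebraic subgroup of \(\SL_2(\C)\) has a finite orbit on \(\Ps^1\), which gives an algebraic Riccati solution, and the ramification analysis shows that it is rational.

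Neither of your fallback routes closes the gap as sketched, because exponent data alone do not exclude integral \(\nu\ne0\).
\begin{enumerate}
\item In Kovacic's case-2 test, \(E_\infty=\{0\}\) and \(E_0=\{2,2+4\nu,2-4\nu\}\cap\Z\). For \(\nu\in\Z\setminus\{0\}\), the choice \(e_0=2-4|\nu|\) gives \(d=2|\nu|-1\in\Z_{\geq0}\). You would still have to rule out the auxiliary polynomial of degree \(d\).
\item In the \(\operatorname{Sym}^2\) route, the exponent \(\rho=1-2|\nu|=-N\) is compatible with the matching at \(\infty\). You would first need to justify that the semi-invariant is formally a multiple of \(u_+u_-\). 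Then you would need that this series does not terminate, which requires a formula for its coefficients: the coefficient of \(t^{-2m}\) is a nonzero multiple of \(\prod_{k=1}^m(4\nu^2-(2k-1)^2)\). Alternatively, you could invoke the logarithmic local monodromy at \(0\) for integral \(\nu\).
\end{enumerate}
The Riccati ramification argument above avoids all of this.
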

\begin{proof}
Since the equation has no first derivative term, its Wronskian is a nonzero constant.
The differential Galois group \(G\) is therefore a subgroup of \(\SL_2(\C)\).
Suppose that \(G\) is a proper algebraic subgroup.
Such a subgroup has a finite orbit on \(\Ps^1\).
For a finite subgroup this follows from finiteness.
For a proper subgroup of positive dimension, the identity component is solvable and fixes either one line or a pair of lines; the full subgroup preserves that finite set.
The differential Galois correspondence \cite[Theorem~1.3.9]{Singer2009} then gives a nonzero solution whose logarithmic derivative \(h\) is algebraic over \(\C(t)\).
It satisfies
\begin{equation}\label{eq:riccati}
 h'+h^2=\lambda+\frac{\nu^2-1/4}{t^2}.
\end{equation}

We determine the possible ramification of \(h\).
At infinity, a Puiseux expansion in \eqref{eq:riccati} must begin with \(s\) or \(-s\), where \(s^2=\lambda\).
Coefficient comparison gives
\[
 h=\pm s+\sum_{n\geq2}h_nt^{-n}.
\]
To verify the absence of fractional powers, take the first exponent outside the Laurent series in integral powers of \(t^{-1}\) determined by the chosen sign.
Its coefficient in \(h^2\) is \(\pm2s\) times the proposed coefficient, and no preceding term contributes to that exponent.
Thus the proposed coefficient is zero.
The coefficient of \(t^{-1}\) is zero for the same reason.
This argument applies to every algebraic conjugate of \(h\), so the algebraic function field generated by \(h\) is unramified over infinity.

At a finite point other than \(0\), a finite value of \(h\) gives a holomorphic solution of equation \eqref{eq:riccati}, which has order one.
At a pole we use \(v=1/h\), which satisfies
\[
 v'=1-\left(\lambda+\frac{\nu^2-1/4}{t^2}\right)v^2.
\]
At such a point \(v\) is holomorphic and has derivative \(1\).
Thus every pole away from \(0\) is simple, has residue \(1\), and causes no ramification.
The algebraic function field can have a branch point only at \(0\).
There is no nontrivial connected finite cover of \(\Ps^1\) branched over at most one point.
For example, restriction to the complement of that point gives an unramified finite cover of the simply connected complex plane.
Hence \(h\in\C(t)\).

Since \(\nu\notin\tfrac12+\Z\), we have \(\nu^2-1/4\ne0\).
The equation at \(0\) forces a simple pole with residue \(\rho=1/2+\nu\) or \(\rho=1/2-\nu\).
If \(N\) is the number of the other finite poles, the coefficient of \(t^{-1}\) at infinity gives \(\rho+N=0\).
It follows that \(\nu\in\tfrac12+\Z\), a contradiction.
Thus \(G=\SL_2(\C)\).

A finite algebraic extension does not change the identity component of a differential Galois group; see \cite[Corollary~1.3.10(3)]{Singer2009}.
Since \(\SL_2(\C)\) is connected and its standard representation is irreducible, irreducibility persists after every such extension.
\end{proof}

\subsection{Isomorphisms after algebraic extension}
We need to distinguish the equations even after a finite algebraic extension.
\begin{proposition}\label{prop:bessel-isomorphisms}
Suppose that \(\lambda,\mu\ne0\) and \(\nu,\eta\notin\tfrac12+\Z\).
There is a nonzero differential homomorphism from \(B_{\lambda,\nu}\) to \(B_{\mu,\eta}\) over an algebraic extension of \(\C(t)\) if and only if
\begin{equation}\label{eq:bessel-isomorphism-condition}
 \lambda=\mu,\qquad
 \bigl(\nu-\eta\in\Z\quad\hbox{or}\quad\nu+\eta\in\Z\bigr).
\end{equation}
Every such homomorphism has a matrix over \(\C(t)\).
In particular, the homomorphism is an isomorphism.
\end{proposition}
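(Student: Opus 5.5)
We prove the two directions separately. For necessity, we use the formal structure at infinity and then the Riccati argument from the proof of Proposition~\ref{prop:bessel-group}. For sufficiency, we construct the homomorphism directly from contiguity.

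\emph{Necessity of \(\lambda=\mu\).} Suppose \(\phi\colon B_{\lambda,\nu}\to B_{\mu,\eta}\) is nonzero over a finite extension \(F\) of \(\C(t)\). By Proposition~\ref{prop:bessel-group} both modules stay irreducible over \(F\). Hence \(\phi\) is an isomorphism, since its kernel and image are submodules. Over the formal completion at infinity, after ramification, an isomorphism must preserve the exponential factors. By Lemma~\ref{lem:bessel-formal} these factors are \(e^{\pm\sqrt\lambda t}\) for \(B_{\lambda,\nu}\) and \(e^{\pm\sqrt\mu t}\) for \(B_{\mu,\eta}\). Ramification only rescales the variable, so it does not change this comparison. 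Therefore \(\{\pm\sqrt\lambda\}=\{\pm\sqrt\mu\}\), which gives \(\lambda=\mu\).

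\emph{Descent to \(\C(t)\).} Next we show that any such homomorphism descends to \(\C(t)\). Over the algebraic closure \(\overline{\C(t)}\), take the Galois closure of \(F\). Galois conjugates of \(\phi\) are again homomorphisms. The Hom space over \(F\) is the space of horizontal vectors of \(B_{\lambda,\nu}^\vee\otimes B_{\mu,\eta}\). By irreducibility and Schur's lemma this space is at most one-dimensional, so \(\phi\) is determined up to a constant.

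Equivalently, the horizontal sections of \(\mathrm{End}(V)\) correspond to invariants of the Galois group of the combined system. That group's identity component is unchanged by finite extension, by \cite[Corollary~1.3.10(3)]{Singer2009}. Consider the combined system \(B_{\lambda,\nu}\oplus B_{\lambda,\eta}\) over \(\C(t)\), and let \(G\subset\SL_2\times\SL_2\) be its group. Both projections are surjective by Proposition~\ref{prop:bessel-group}. By Goursat's lemma, either \(G^\circ=\SL_2\times\SL_2\) or \(G^\circ\) is the graph of an automorphism. All automorphisms of \(\SL_2\) are inner, so in the second case \(G^\circ\) is a conjugate of the diagonal.

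A nonzero homomorphism over \(F\) exists only in the graph case. In that case \(G\) normalizes the graph, and the normalizer of the diagonal is the diagonal times \(\{\pm1\}\) acting on one factor. Thus \(G\)-invariant Hom lines exist, and the one-dimensional invariant space is \(G\)-stable, so it is acted on by a character of \(G/G^\circ\), which is finite. If this character is nontrivial, the homomorphism is defined only over a quadratic extension; we need to rule that out. This descent step, showing that the character is trivial, is the main obstacle.

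\emph{Showing the character is trivial.} First try to get \(\phi\in\C(t)\) directly: the entries of \(\phi\) lie in \(F\), and a pole or branch point could occur only where the equations are singular. The equations are singular only at \(t=0\) and \(t=\infty\). The entries are algebraic functions unramified away from \(\{0,\infty\}\). At \(\infty\), Lemma~\ref{lem:bessel-formal} shows that the formal solutions involve only integral powers of \(t\). The entries of \(\phi\) can be written through formal solutions, and the exponentials cancel once \(\lambda=\mu\), so the entries are unramified at \(\infty\). A cover of \(\Ps^1\) branched only at \(0\) is trivial, exactly as in the proof of Proposition~\ref{prop:bessel-group}. Hence \(\phi\) has a matrix over \(\C(t)\).

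\emph{Necessity of the condition on \(\nu,\eta\).} Now look at \(t=0\). The local exponents of \(B_{\lambda,\nu}\) there are \(1/2\pm\nu\). A rational isomorphism is meromorphic at \(0\) with a single-valued gauge, so it can only shift exponents by integers. Therefore \(\{1/2\pm\nu\}\equiv\{1/2\pm\eta\}\pmod\Z\), which is exactly \(\nu-\eta\in\Z\) or \(\nu+\eta\in\Z\).

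\emph{Sufficiency.} Assume \eqref{eq:bessel-isomorphism-condition}. The equation depends only on \(\nu^2\), so we may assume \(\eta=\nu+m\) with \(m\in\Z\). The contiguous relations \eqref{eq:contiguous-operators} and \eqref{eq:shift-recurrences} then provide the homomorphism. In the variable \(x=\lambda t^2/4\), and after the twist \eqref{eq:normalization}, they give a first-order operator with coefficients in \(\C(t)\). This operator maps solutions of one equation to solutions of the other. The denominators in these relations are nonzero because \(\nu\notin\tfrac12+\Z\) and the starting parameter may be chosen suitably. The resulting map is nonzero, so by irreducibility it is an isomorphism.
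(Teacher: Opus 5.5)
Your proposal is correct and follows essentially the paper's route. Irreducibility (Proposition~\ref{prop:bessel-group}) makes a nonzero map an isomorphism, and the exponential factors $e^{\pm\sqrt\lambda\,t}$ at infinity force $\lambda=\mu$. Integral powers in the formal solutions at infinity, together with regularity of the Hom equation away from $t=0$, make every entry rational, so the Goursat/character discussion is unnecessary. Local monodromy at $0$ gives $\nu-\eta\in\Z$ or $\nu+\eta\in\Z$, and your twisted contiguity operator is exactly the paper's Darboux operator $u\mapsto u'-\frac{\nu+1/2}{t}u$.

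Two points need tightening. At infinity, it is algebraicity of the entries, not $\lambda=\mu$ alone, that removes the components carrying $e^{\pm2\sqrt\lambda\,t}$. For sufficiency, verify directly that this operator intertwines the two equations; its matrix has determinant $-\lambda$, so no value of $\nu$ is exceptional. Do this instead of appealing to \eqref{eq:shift-recurrences}, whose downward steps can have vanishing denominators when $\nu\in\Z$ and where $\nu\notin\tfrac12+\Z$ plays no role.
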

\begin{proof}
We write both equations as systems of order one for \({}^t(u,u')\).
A homomorphism matrix satisfies their linear Hom equation.
By Proposition~\ref{prop:bessel-group}, a nonzero homomorphism over an algebraic extension is an isomorphism.

By Lemma~\ref{lem:bessel-formal}, we obtain the exponents of the exponential factors in a formal solution of the Hom equation by taking differences of members of
\[
 \{\sqrt{\lambda},-\sqrt{\lambda}\}t
 \quad\hbox{and}\quad
 \{\sqrt{\mu},-\sqrt{\mu}\}t.
\]
An algebraic solution can contain only the factor with zero exponent.
If \(\lambda\ne\mu\), the two sets are disjoint, so there is no nonzero algebraic homomorphism.
If \(\lambda=\mu\), a homomorphism with algebraic entries can have nonzero components only between exponential factors with equal exponents.
We choose matching signs.
The formal fundamental matrices with the exponentials removed have entries in \(\C((t^{-1}))\).
They are invertible over that field.
Thus every entry of an algebraic homomorphism matrix has a Laurent expansion in integral powers of \(t^{-1}\) at infinity.
Every algebraic conjugate satisfies the same Hom equation and has the same property.
At a finite ordinary point the Hom equation has holomorphic coefficients, so all its solutions are single-valued there.
Each entry of the homomorphism matrix can therefore ramify only at \(0\).
The argument about finite covers in Proposition~\ref{prop:bessel-group} shows that every entry is rational.

The local exponents of \eqref{eq:bessel-normal} at \(0\) are \(1/2+\nu,1/2-\nu\).
The eigenvalues of the local monodromy are \(-e^{2\pi\sqrt{-1}\nu}\) and \(-e^{-2\pi\sqrt{-1}\nu}\), including the cases with logarithms.
A rational isomorphism preserves their unordered pair.
Hence \(\nu-\eta\in\Z\) or \(\nu+\eta\in\Z\).

Conversely, the replacement \(\nu\mapsto-\nu\) leaves \eqref{eq:bessel-normal} unchanged.
An integer shift follows from the two operators
\begin{equation}\label{eq:darboux}
 v=u'-\frac{\nu+1/2}{t}u,\qquad
 u=\frac1\lambda\left(v'+\frac{\nu+1/2}{t}v\right).
\end{equation}
The function \(v\) satisfies \eqref{eq:bessel-normal} with \(\nu\) replaced by \(\nu+1\).
The matrix sending \({}^t(u,u')\) to \({}^t(v,v')\) is
\[
 H_\nu=
 \begin{pmatrix}
 -(\nu+1/2)/t&1\\
 \lambda+(\nu+1/2)^2/t^2&-(\nu+1/2)/t
 \end{pmatrix},
 \qquad \det H_\nu=-\lambda.
\]
Thus every integer shift gives a rational isomorphism, with no exceptional value of \(\nu\).
Iteration and reflection give \eqref{eq:bessel-isomorphism-condition}.
\end{proof}

\subsection{Independence of the equations}
We put \(\nu\sim\eta\) if \(\nu-\eta\in\Z\) or \(\nu+\eta\in\Z\).
This is the equivalence relation generated by integer shifts and change of sign.
The next lemma gives the differential Galois group of the direct sum of the differential modules.
\begin{lemma}\label{lem:independent-groups}
Let \(\eta_1,\ldots,\eta_M\notin\tfrac12+\Z\) represent distinct equivalence classes for \(\sim\).
For the direct sum
\begin{equation}\label{eq:independent-family}
 \bigoplus_{\ell=0}^2\ \bigoplus_{s=1}^M B_{\omega^\ell,\eta_s}
\end{equation}
over \(\C(t)\), the differential Galois group is \(\SL_2(\C)^{3M}\).
\end{lemma}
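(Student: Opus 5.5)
Our plan is to use the Goursat--Kolchin criterion for products of \(\SL_2\): if \(G\subset\prod_{i}\SL_2(\C)\) is an algebraic group projecting onto each factor, and for every pair \(i\ne k\) the projection to \(\SL_2\times\SL_2\) is surjective, then \(G\) is the full product. We apply this to \(G\), the differential Galois group of \eqref{eq:independent-family}; it sits inside \(\SL_2(\C)^{3M}\) because each summand has constant Wronskian. The projection to each factor is the Galois group of the individual summand, which is surjective by Proposition~\ref{prop:bessel-group}, applicable since \(\omega^\ell\ne0\) and \(\eta_s\notin\tfrac12+\Z\).

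The pairwise step uses Goursat's lemma. Let \(H\subset\SL_2\times\SL_2\) be the image of \(G\) for two distinct summands \(B_{\omega^\ell,\eta_s}\) and \(B_{\omega^{\ell'},\eta_{s'}}\), so \((\ell,s)\ne(\ell',s')\). Since both projections of \(H\) are surjective, Goursat's lemma forces one of two cases. Either \(H\) is everything, or the identity component \(H^\circ\) is the graph of an automorphism \(\SL_2\to\SL_2\). In the second case we use that every automorphism of \(\SL_2(\C)\) is inner. Then after passing to the finite extension fixed by \(H^\circ\), which has the same identity component by \cite[Corollary~1.3.10(3)]{Singer2009}, the two standard representations become isomorphic. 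Here we use that \(\SL_2\) has no outer automorphisms, so we do not need to worry about duals; the standard representation is self-dual anyway. More precisely, \(H\) contains the graph of an inner automorphism on \(H^\circ\), and the Tannakian correspondence of \cite[Theorem~1.5.2]{Singer2009} turns the \(H^\circ\)-isomorphism into a nonzero differential homomorphism \(B_{\omega^\ell,\eta_s}\to B_{\omega^{\ell'},\eta_{s'}}\) over an algebraic extension of \(\C(t)\). Proposition~\ref{prop:bessel-isomorphisms} then gives \(\omega^\ell=\omega^{\ell'}\) and \(\eta_s\sim\eta_{s'}\). The first equality forces \(\ell=\ell'\), and distinctness of the classes forces \(s=s'\), a contradiction. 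So every pairwise projection is onto.

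The final step, and the main obstacle, is passing from pairwise surjectivity to the full product; this is where the perfectness of the factors is used. We argue by induction on the number of factors. Suppose the projection of \(G\) to the first \(n\) factors is \(\SL_2^n\) and consider adding one more factor. Goursat's lemma says the kernel of the projection of \(G\) to the last factor is normal, as is its image in \(\SL_2^n\). If that normal image is not all of \(\SL_2^n\), the quotient of \(\SL_2^n\) by it is a simple quotient group isomorphic to a quotient of \(\SL_2\). The normal subgroups of \(\SL_2^n\) are products of factors up to the central \(\pm1\), so this yields a nontrivial isogeny between some \(\SL_2\) factor \(i\) and the new factor. That isogeny would make the pairwise projection for the pair (factor \(i\), new factor) a proper subgroup, contradicting the pairwise step. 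Care is needed with the centers \(\pm1\). Restricting to identity components and using that \(\mathrm{PSL}_2\) is simple handles this. Hence \(G=\SL_2(\C)^{3M}\).
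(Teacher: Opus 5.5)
Your proposal is correct and follows the paper's route: surjectivity onto each factor from Proposition~\ref{prop:bessel-group}, exclusion of any diagonal coupling between two distinct summands via an inner automorphism on the identity component, the finite fixed field, and Proposition~\ref{prop:bessel-isomorphisms}, and then assembly into the full product. The paper does the assembly on the Lie algebra of \(G^0\), where a subdirect product of copies of the simple algebra \(\mathfrak{sl}_2\) is a sum of diagonals, which avoids your bookkeeping with the centers \(\pm1\) (note that \(\SL_2^n\) modulo the normal subgroup may be \(\SL_2\) rather than simple); your group-level step could also be shortened by Ribet's commutator argument, since \(\SL_2(\C)\) is perfect.
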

\begin{proof}
By Proposition~\ref{prop:bessel-group}, the group \(G\) projects onto each \(\SL_2\) factor.
The identity component \(G^0\) also projects onto each factor, since the quotient \(G/G^0\) is finite and \(\SL_2\) is connected.
Its Lie algebra is therefore a subdirect product of \(3M\) copies of \(\mathfrak{sl}_2\).

A subdirect product of simple Lie algebras is a direct sum of diagonal copies on disjoint sets of isomorphic factors.
We can prove this assertion by induction: when one more simple factor is added, the kernel of its projection to the preceding product is either the full simple factor or zero; in the second case the new factor is a quotient of one simple summand of the preceding product.
If the Lie algebra of \(G^0\) is not the full direct sum, two of its projections to \(\mathfrak{sl}_2\) are related by an automorphism.
Here we use that every automorphism of \(\mathfrak{sl}_2(\C)\) is inner.
Since \(G^0\) is connected, the corresponding two standard representations are isomorphic on \(G^0\).
The resulting homomorphism matrix has entries in the fixed field of \(G^0\) inside a Picard--Vessiot field of \eqref{eq:independent-family}.
That fixed field is a finite algebraic extension of \(\C(t)\), by the differential Galois correspondence.
Proposition~\ref{prop:bessel-isomorphisms} excludes such a homomorphism between any two distinct summands of \eqref{eq:independent-family}.
Thus the Lie algebra is the full direct sum.
The ambient product \(\SL_2^{3M}\) is connected, so \(G=\SL_2^{3M}\).
\end{proof}

\section{The three coefficient subsequences}
\label{sec:rank24}
In this section, we determine the minimal differential equations of the three generating functions when the lower parameters do not satisfy \eqref{eq:integer-condition}.

We retain \(q_N\) from \eqref{eq:product-coefficients} and put
\begin{equation}\label{eq:residue-functions}
 f_r(w)=\sum_{n\geq0}q_{3n+r}w^n
 \qquad(r=0,1,2).
\end{equation}
Each \(f_r\) is entire.
Using \(\sum_{k=0}^2\omega^{k(N-r)}=3\) when \(N\equiv r\pmod3\), and zero otherwise, we obtain
\begin{equation}\label{eq:root-filter}
 f_r(z^3)=\frac{z^{-r}}3
 \sum_{k=0}^2\omega^{-kr}
 \prod_{j=0}^2{}_0F_1(;c_j;\omega^{j+k}z).
\end{equation}
The value at \(z=0\) is interpreted by its convergent series.

The three generating functions have the following common rank and irreducibility properties.
\begin{theorem}\label{thm:rank24}
Suppose that \(c_j\notin\Z_{\leq0}\cup(\tfrac12+\Z)\) for \(j=0,1,2\), and that \eqref{eq:integer-condition} fails.
For each \(r=0,1,2\), the monic minimal equation of \(f_r\) over \(\C(w)\) has order \(24\) and is irreducible.
After the ramification \(w=(t/2)^6\), its formal decomposition at infinity has the exponential factors
\begin{equation}\label{eq:rank24-exponentials}
 e^{2\zeta_6^jt}\quad(j=0,\ldots,5),\qquad 1,
 \qquad \zeta_6=\exp(\pi\sqrt{-1}/3).
\end{equation}
Each exponential factor with nonzero exponent has an associated regular singular factor of rank \(3\).
The factor with zero exponent has rank \(6\).
Equivalently, over \(\C(w)\) the part of slope \(1/6\) has rank \(18\), and the regular singular part has rank \(6\).
\end{theorem}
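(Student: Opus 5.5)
The plan is to move to the Bessel normal form of Section~\ref{sec:bessel} and read off the differential module of \(f_r\) from the representation theory of \(\SL_2(\C)^{3M}\) (Lemma~\ref{lem:independent-groups}). We substitute \(z=t^2/4\), so that \(w=z^3=(t/2)^6\). By \eqref{eq:normalization} with \(\lambda=\omega^m\) and \(\nu_j=c_j-1\),
\[
 {}_0F_1(;c_j;\omega^m z)=t^{1/2-c_j}u_{j,m},
\]
where \(u_{j,m}\) is a solution of \(B_{\omega^m,\nu_j}\). Inserting this in \eqref{eq:root-filter} gives
\[
 f_r(z^3)=\frac{z^{-r}}{3}\,t^{3/2-c_0-c_1-c_2}\sum_{k=0}^2\omega^{-kr}\,u_{0,k}\,u_{1,k+1}\,u_{2,k+2}.
\]
The prefactor is a rational function times a fixed power of \(t\), common to all \(k\). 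By the last paragraph of Section~\ref{sec:preliminaries}, this prefactor changes neither the exponential factors nor their ranks, nor irreducibility. So it suffices to study the element \(\Phi_r=\sum_k\omega^{-kr}u_{0,k}u_{1,k+1}u_{2,k+2}\) of \(T_0\oplus T_1\oplus T_2\), where
\[
 T_k=B_{\omega^k,\nu_0}\otimes B_{\omega^{k+1},\nu_1}\otimes B_{\omega^{k+2},\nu_2}
\]
is a rank-\(8\) module over \(\C(t)\).

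Next I choose representatives \(\eta_1,\dots,\eta_M\) of the \(\sim\)-classes among \(\nu_0,\nu_1,\nu_2\). Each \(B_{\omega^m,\nu_j}\) is rationally isomorphic to \(B_{\omega^m,\eta_s}\) by Proposition~\ref{prop:bessel-isomorphisms}. Lemma~\ref{lem:independent-groups} then says the Galois group of the whole family is \(\SL_2^{3M}\). Each \(T_k\) is the outer tensor product of standard representations of the three factors indexed by the pairs \((\omega^{j+k},[\nu_j])\), \(j=0,1,2\). Such a representation is irreducible, and it is determined up to isomorphism by its set of three factor indices.

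Failure of \eqref{eq:integer-condition} means that not all \(\nu_j\) lie in one class. One then checks that the index sets for \(k=0,1,2\) are pairwise distinct. Indeed, if \(k\ne k'\) gave the same set, then matching the \(\lambda\)-coordinate \(\omega^{j+k}=\omega^{j'+k'}\) forces \([\nu_j]=[\nu_{j+k-k'}]\) for all \(j\), and this propagates to all three classes being equal. Hence \(T_0,T_1,T_2\) are pairwise nonisomorphic irreducible modules. The projection of \(\Phi_r\) to each \(T_k\) is a nonzero pure tensor, since the coefficients \(\omega^{-kr}\) are nonzero. Lemma~\ref{lem:module-operations}(1) then shows that \(\Phi_r\) generates the whole direct sum. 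Thus \(\mathcal N_{f_r}\) over \(\C(t)\) has rank \(24\). It is a direct sum of three irreducibles, and the solution space of its minimal operator is an irreducible representation of the connected group \(\SL_2^{3M}\), namely the dual of \(T_0\oplus T_1\oplus T_2\).

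To descend to \(\C(w)\) I argue with the Galois group rather than with operators. Passage from \(\C(w)\) to the finite extension \(\C(t)\) does not change the identity component, by \cite[Corollary~1.3.10(3)]{Singer2009}. The identity component of the Galois group of \(f_r\) over \(\C(w)\) therefore acts irreducibly on the \(\C\)-span \(V\) of its conjugates. Since \(V\) is also the solution space over \(\C(t)\), it has dimension \(24\). So the minimal equation over \(\C(w)\) has order \(24\), and any factorization over \(\C(w)\) would yield a \(G^0\)-stable subspace of \(V\), which is impossible. I expect this descent to be the most delicate step: over \(\C(t)\) the module is a sum of three irreducibles, so irreducibility over \(\C(w)\) must come from the Galois action of the cover, which permutes the \(T_k\). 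I would check the descent carefully through the fixed-field correspondence.

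For the formal decomposition I use Lemma~\ref{lem:bessel-formal}. With the \(t^2/4\) normalization, \(B_{\omega^m}\) contributes \(e^{\pm\zeta_6^m t}\), with no ramification. The exponents of \(T_k\) are therefore \(\epsilon_0+\epsilon_1\zeta_6+\epsilon_2\zeta_6^2\) up to a cyclic relabeling, with \(\epsilon_i=\pm1\). Using \(\zeta_6^2=\zeta_6-1\), these eight values are \(\pm2,\pm2\zeta_6,\pm2\zeta_6^2\), each once, and \(0\) twice. Each \(T_k\) thus gives each nonzero exponent in \eqref{eq:rank24-exponentials} with rank \(1\) and gives the zero exponent with rank \(2\). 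Summing over \(k\) gives rank \(3\) for each of the six exponentials and rank \(6\) for the regular singular part. The slope over \(\C(w)\) is \(1/6\), since \(w\sim t^6\), and the slope parts have ranks \(18\) and \(6\).
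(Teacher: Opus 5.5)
Your setup, the check that the three index sets are distinct, the use of Lemma~\ref{lem:module-operations}(1) over $\C(t)$, and the exponent count via $\zeta_6^2=\zeta_6-1$ all match the paper's proof. The genuine gap is in the descent to $\C(w)$.

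\textbf{The irreducibility argument fails.} You claim that the identity component $G^0$ of the Galois group over $\C(w)$ acts irreducibly on the $24$-dimensional span $V$. This is false. As you note, $G^0$ equals the identity component over $\C(t)$. That group acts on $V$ through $\SL_2^{3M}$, up to a character coming from the common power $t^{3/2-c_0-c_1-c_2}$. Under this action $V$ is the sum of the three pairwise nonisomorphic $8$-dimensional irreducibles you constructed, so $G^0$ has stable subspaces of dimension $8$ and $16$. Your earlier sentence calling the dual of $T_0\oplus T_1\oplus T_2$ ``an irreducible representation'' makes the same mistake. No argument through $G^0$ can give irreducibility; it has to come from the finite part.

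The missing step is the one you anticipated but did not carry out. The substitution $z\mapsto\omega z$ (equivalently $t\mapsto\zeta_6t$) commutes with $\partial_w$ and fixes $f_r(z^3)$. It sends the $k$-th product in \eqref{eq:root-filter} to the $(k+1)$-th, so it permutes the three simple summands cyclically. A nonzero proper $\C(w)$-submodule would extend to a $\C(z)$-submodule that is a sum of some of these pairwise nonisomorphic summands. That submodule would also be stable under this transitive permutation, so it is zero or everything. This is the paper's argument.

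\textbf{Two smaller steps also need filling.} First, ``since $V$ is also the solution space over $\C(t)$'' is asserted, not proved. A priori the larger Galois group over $\C(w)$ could move $f_r$ out of the $24$-dimensional span and raise the order. The paper closes this with an invariance argument. The monic minimal operator over $\C(z)$ is unique, so it is invariant under $z\mapsto\omega z$. Hence its coefficients lie in $\C(w)$, and the order over $\C(w)$ is at most $24$.

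Second, you treat the function $\Phi_r$ as an element of the abstract module $T_0\oplus T_1\oplus T_2$. That identification requires the multiplication map from the tensor products of solution spaces into the Picard--Vessiot field to be injective. The paper proves this. Each multiplication map is equivariant and nonzero on an irreducible domain, so each product has full order $8$. Since the three images are pairwise nonisomorphic simple modules, their sum inside the field is direct.
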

\begin{proof}
We put \(\nu_j=c_j-1\).
The failure of \eqref{eq:integer-condition} means that the three \(\nu_j\) do not all lie in one class for \(\sim\).
Let \(\eta_1,\ldots,\eta_M\) represent their distinct classes, where \(M=2\) or \(3\).
For each \(j\), we let \(s_j\in\{1,\ldots,M\}\) be the index determined by \(\nu_j\sim\eta_{s_j}\).
The ordered triple \((s_0,s_1,s_2)\) is not constant.

We first extend from \(\C(z)\) to \(\C(t)\), where \(z=t^2/4\).
By \eqref{eq:normalization}, each of the three products in \eqref{eq:root-filter} differs from a product of solutions of \eqref{eq:bessel-normal} by the same power
\begin{equation}\label{eq:common-power}
 t^{3/2-c_0-c_1-c_2}.
\end{equation}
For a fixed \(k\), the three normalized factors have distinct constants \(1,\omega,\omega^2\) in their equations.
Proposition~\ref{prop:bessel-isomorphisms} replaces each order \(\nu_j\) by its representative \(\eta_{s_j}\) by a rational isomorphism.
The three products, for \(k=0,1,2\), therefore correspond to the ordered triples of indices
\begin{equation}\label{eq:class-triples}
 (s_0,s_1,s_2),\qquad
 (s_2,s_0,s_1),\qquad
 (s_1,s_2,s_0)
\end{equation}
at the respective constants \(1,\omega,\omega^2\).
These choices are distinct, since a triple whose entries are not all equal has an orbit of length \(3\) under cyclic permutation.

By Lemma~\ref{lem:independent-groups}, each choice gives a tensor product of three standard representations of three distinct \(\SL_2\) factors.
Each tensor product is irreducible and has dimension \(8\).
The three choices give nonisomorphic representations: their sets of nontrivial factors in \(\SL_2^{3M}\) differ.
The equivalence between differential modules and representations \cite[Theorem~1.5.2]{Singer2009} gives three pairwise nonisomorphic irreducible differential modules of rank \(8\).
Both properties are preserved when we tensor with the module of rank \(1\) in \eqref{eq:common-power}.

We verify that the actual products generate these modules.
Multiplication defines a map from the tensor product of the three solution spaces to the common Picard--Vessiot field.
This map is equivariant for the differential Galois group.
It is nonzero, since a product of three nonzero solutions is nonzero.
Its domain is irreducible, so the map is injective.
Every nonzero product consequently has the full minimal order \(8\).
The same conclusion holds after multiplication by the power in \eqref{eq:common-power}.
The modules of the three actual products remain pairwise nonisomorphic.
These properties hold over \(\C(z)\), since they hold after extension to \(\C(t)\).

Every coefficient \(\omega^{-kr}/3\) in \eqref{eq:root-filter} is nonzero.
Multiplication by \(z^{-r}\) does not change the generated differential modules.
By Lemma~\ref{lem:module-operations}(1), the right side of \eqref{eq:root-filter} generates the direct sum of the three modules of rank \(8\) over \(\C(z)\).
Its minimal order over \(\C(z)\) is therefore \(24\).

We next descend to \(\C(w)\).
On \(\C(z)\), we use the derivation \(\partial_w=(3z^2)^{-1}\partial_z\).
The field automorphism \(z\mapsto\omega z\) commutes with this derivation and fixes \(f_r(z^3)\).
It sends the monic minimal operator of this function to another monic minimal operator for the same function.
Uniqueness makes the operator invariant.
Its coefficients therefore belong to \(\C(w)\).
No operator of smaller order exists over \(\C(w)\), since such an operator would also exist over \(\C(z)\).
Thus the minimal order over \(\C(w)\) is \(24\).

To prove irreducibility, suppose that this differential module has a nonzero proper submodule over \(\C(w)\).
After extension to \(\C(z)\), the submodule is a sum of some of the three pairwise nonisomorphic simple summands.
The deck transformation \(z\mapsto\omega z\) permutes these three summands transitively.
The extended submodule must be invariant under that permutation.
Only the zero sum and the full sum are invariant.
This contradicts the choice of a nonzero proper submodule.

Finally, we compute the formal decomposition.
For each tensor product, Lemma~\ref{lem:bessel-formal} gives the following eight coefficients of \(t\) in the exponents of its exponential factors:
\[
 \pm\sqrt{1}\ \pm\sqrt{\omega}\ \pm\sqrt{\omega^2}.
\]
Their multiset is independent of the choices of square roots.
The polynomial with these eight values as roots is
\begin{equation}\label{eq:signed-polynomial}
 X^2(X^6-64).
\end{equation}
Thus two coefficients of \(t\) in the exponents are zero, and the other six are the distinct numbers \(2\zeta_6^j\).
For the three summands, we obtain the following polynomial by taking all coefficients of \(t\) in the exponents as roots, counted with multiplicity:
\begin{equation}\label{eq:multiplicity-polynomial}
 X^6(X^6-64)^3.
\end{equation}
These coefficients of \(t\) in the exponents do not change when we multiply by the power in \eqref{eq:common-power} or by the rational factor \(z^{-r}\).
Since \(t=2w^{1/6}\), each nonzero coefficient gives slope \(1/6\) over \(\C(w)\).
Each occurs three times, and zero occurs six times.
This gives \eqref{eq:rank24-exponentials} with the asserted ranks.
\end{proof}

\begin{remark}\label{rem:rank-location}
The order \(24\) in Theorem~\ref{thm:rank24} concerns the entire generating functions in \eqref{eq:residue-functions}.
It is not an assertion about the minimal equation of the \(F_C\) restriction itself.
Over \(\C(z)\), the module of \(f_r(z^3)\) is a direct sum of three irreducible modules of rank \(8\).
The irreducibility assertion holds over \(\C(w)\).
\end{remark}

\section{Exclusion of finite generalized hypergeometric sums}
\label{sec:obstruction}
In this section, we compare the formal decomposition in Theorem~\ref{thm:rank24} with that of an ordinary generalized hypergeometric equation.
We use \(\vartheta=w\,d/dw\).

\subsection{Formal solutions of a generalized hypergeometric equation}
The following lemma describes the formal decomposition at infinity of a generalized hypergeometric differential equation.
\begin{lemma}\label{lem:hypergeometric-formal}
Let \(\alpha\ne0\), \(p,q\in\Z_{\geq0}\), and \(p\leq q+1\).
Consider
\begin{equation}\label{eq:hypergeometric-operator}
 \left[
 \vartheta\prod_{j=1}^q(\vartheta+v_j-1)
 -\alpha w\prod_{i=1}^p(\vartheta+u_i)
 \right]y=0.
\end{equation}
If \(m=q+1-p>0\), its formal decomposition at infinity has a regular singular part of rank \(p\).
Its exponential factors with nonzero exponents, after \(w=s^m\), are
\begin{equation}\label{eq:hypergeometric-exponentials}
 e^{\beta_js},\qquad
 \beta_j^m=m^m\alpha,\qquad j=0,\ldots,m-1.
\end{equation}
Each associated regular singular factor has rank \(1\).
The exponential factors with nonzero exponents have slope \(1/m\) over \(\C(w)\).
If \(m=0\), the equation is regular singular at infinity.
These assertions do not require nonintegral differences between parameters.
\end{lemma}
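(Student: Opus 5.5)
The plan is to pass to the variable \(\xi=1/w\) and read off the formal structure at infinity from the Newton polygon of the operator \eqref{eq:hypergeometric-operator}, then construct the formal solutions explicitly.

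\textbf{Regular singular part.} In terms of \(\vartheta\), the operator is \(Q(\vartheta)-\alpha w P(\vartheta)\), where \(Q(T)=T\prod_{j}(T+v_j-1)\) has degree \(q+1\) and \(P(T)=\prod_i(T+u_i)\) has degree \(p\). A formal solution \(w^{-\rho}\sum_{k\ge0}b_kw^{-k}\), with logarithms allowed, is governed by the dominant term \(\alpha wP(\vartheta)\). Its indicial equation is \(P(-\rho)=0\), and the recurrence \(\alpha P(\vartheta)\) applied to the next term equals \(Q(\vartheta)\) applied to the previous one. Solving this recurrence order by order gives \(p\) independent formal solutions of regular singular type, counted with multiplicity. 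When roots \(-u_i\) coincide or differ by integers, we use the usual Frobenius construction with logarithmic terms, obtained by differentiating in \(\rho\). This is why no nonintegral differences are needed: the counting of \(p\) solutions is uniform. Equivalently, the Newton polygon at infinity has a horizontal edge of length \(p\), which forces the slope-zero part to have rank exactly \(p\). If \(m=0\), then \(P\) and \(Q\) have the same degree, and after dividing by the leading coefficient the operator is Fuchsian at infinity. This gives the last assertion.

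\textbf{Irregular part.} The remaining edge of the Newton polygon joins the point for \(\vartheta^{q+1}\) at \(w\)-degree \(0\) to the point for \(\alpha w\vartheta^p\) at degree \(1\). It has horizontal length \(m\) and slope \(1/m\), so the irregular part has rank \(m=q+1-p\). After \(w=s^m\), we have \(\vartheta=m^{-1}s\,d/ds\). We look for solutions \(e^{\beta s}s^{\sigma}(1+\cdots)\). The leading balance \((\vartheta)^{q+1}\sim\alpha w\vartheta^p\) acting on \(e^{\beta s}\) gives \((\beta s/m)^{q+1}=\alpha s^m(\beta s/m)^p\), that is, \(\beta^m=m^m\alpha\). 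This yields \(m\) distinct nonzero values \(\beta_j\). Each distinct value occupies rank \(1\), since the total irregular rank is \(m\) and there are \(m\) distinct exponents. Therefore each associated regular singular factor has rank \(1\), and no sub-case analysis is needed. The slope \(\deg\varphi/d=1/m\) follows from the definition in Section~\ref{sec:preliminaries}.

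\textbf{Main obstacle.} The main obstacle is justifying the Newton polygon claim: that the ranks of the formal pieces equal the horizontal lengths of the polygon edges. I would cite the formal decomposition in \cite[\S1.4]{Singer2009} together with the standard Newton polygon correspondence, in which slope \(k\) edges of length \(\ell\) yield irregular parts of total rank \(\ell\). Alternatively, I would verify directly that the next coefficient in the expansion of \(e^{\beta s}\) yields a nondegenerate linear recurrence for \(\sigma\) and the series coefficients. That recurrence has leading factor \(\partial_\beta\) of the characteristic polynomial, which is nonzero because the roots \(\beta_j\) are simple.
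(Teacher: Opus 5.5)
Your proposal is correct and follows essentially the paper's proof. Both use the leading balance of \(\vartheta^{q+1}\) against \(\alpha w\vartheta^p\) after \(w=s^m\) to get \(\beta^m=m^m\alpha\), the Newton polygon for the slope-\(1/m\) part of rank \(m\) and the regular singular part of rank \(p\), and the Fuchsian form at infinity when \(m=0\).

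The one difference is that you get rank \(1\) per exponential factor by counting, and that count needs every \(\beta_j\) to actually occur. Your fallback of building \(e^{\beta_js}s^\sigma(1+\cdots)\) directly from the simple root supplies this, and it is exactly the paper's argument. Alternatively, the invariance of the set of exponents under \(s\mapsto e^{2\pi\sqrt{-1}/m}s\) also supplies it.
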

\begin{proof}
We use the operator \eqref{eq:hypergeometric-operator}, which is \cite[Eq.~16.8.3]{DLMF}; see also \cite[\S16.8]{NIST2010}.
Suppose first that \(m>0\), and put \(n=q+1=p+m\).
After \(w=s^m\), we have \(\vartheta=m^{-1}s\,d/ds\).
For \(y=e^{\beta s}\) with \(\beta\ne0\), the terms of highest degree in \(s\) give
\begin{equation}\label{eq:hypergeometric-characteristic}
 \beta^p(\beta^m-m^m\alpha)=0,
\end{equation}
up to a nonzero constant.
The \(m\) nonzero roots are distinct.
At each such root, substitution of
\[
 y=e^{\beta_js}s^\rho\left(1+\sum_{n\geq1}d_ns^{-n}\right)
\]
first determines \(\rho\) and then the coefficients \(d_n\).
The simple root gives one formal solution and an associated regular singular factor of rank \(1\).
For the two terms of \eqref{eq:hypergeometric-operator}, the Newton polygon has a segment of horizontal length \(m\) giving slope \(1/m\).
The remaining factor is regular singular and has rank \(p\).
Equivalently, after removing the \(m\) distinct exponential factors with nonzero exponents, the factor with zero exponent has rank \(p\).
The highest powers \(\vartheta^{p+m}\) and \(\alpha w\vartheta^p\) have nonzero coefficients for all parameter values.
Thus integer differences between parameters do not change these slopes or the simplicity of the nonzero roots.

If \(m=0\), we substitute \(w=1/x\) and multiply the equation by \(x\).
This gives an equation in \(x\,d/dx\) with holomorphic coefficients and nonzero highest coefficient at \(x=0\).
Hence infinity is regular singular.
\end{proof}

\begin{remark}\label{rem:terminating-targets}
A terminating target in \eqref{eq:pfq-definition} is a polynomial, even if its displayed numbers \(p,q\) do not satisfy \(p\leq q+1\).
A nonzero polynomial belongs to \(\C(w)\) and generates a differential module of rank \(1\) with no exponential factor with nonzero exponent.
We treat such summands by that module rather than by the full displayed hypergeometric operator.
\end{remark}

\subsection{Finite sums and composition factors}
The irreducibility in Theorem~\ref{thm:rank24} excludes a finite sum, irrespective of the number of its terms.
\begin{proposition}\label{prop:obstruction}
Under the assumptions of Theorem~\ref{thm:rank24}, none of \(f_0,f_1,f_2\) admits an expression
\begin{equation}\label{eq:product-forbidden-sum}
 f_r(w)=\sum_{\ell=1}^L R_\ell(w)\,
 {}_{p_\ell}F_{q_\ell}
 \left(\begin{matrix}\boldsymbol u_\ell\\\boldsymbol v_\ell\end{matrix};
                      \alpha_\ell w\right),
\end{equation}
where \(L\) is finite, \(R_\ell\in\C(w)\), \(\alpha_\ell\ne0\), every lower parameter of every target function avoids nonpositive integers, and every target series converges near the origin or terminates.
The equality in \eqref{eq:product-forbidden-sum} is excluded even when poles of the separate rational factors cancel in the sum.
\end{proposition}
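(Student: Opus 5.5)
Suppose, for contradiction, that an expression \eqref{eq:product-forbidden-sum} holds for some \(r\). We first discard the summands with \(R_\ell=0\) and put \(f_\ell(w)={}_{p_\ell}F_{q_\ell}(\boldsymbol u_\ell;\boldsymbol v_\ell;\alpha_\ell w)\). Each \(f_\ell\) satisfies a scalar equation. If the series terminates, \(f_\ell\) is a polynomial and, by Remark~\ref{rem:terminating-targets}, we use the rank-one module \(\mathcal N_{f_\ell}\) with no nonzero exponential factor. Otherwise convergence near the origin forces \(p_\ell\leq q_\ell+1\), and \(f_\ell\) satisfies \eqref{eq:hypergeometric-operator} with \(\alpha=\alpha_\ell\). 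The identity is an identity of functions in the differential field of germs, or of meromorphic functions near a common regular point. Cancellation of poles among the \(R_\ell\) is therefore irrelevant: Lemma~\ref{lem:module-operations}(2) only uses the equality \(f_r=\sum_\ell R_\ell f_\ell\) in the ambient field.

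By Theorem~\ref{thm:rank24}, \(\mathcal N_{f_r}\) is irreducible of rank \(24\). Lemma~\ref{lem:module-operations}(2) then makes \(\mathcal N_{f_r}\) a composition factor of the module \(\mathcal M_\ell\) of the equation for some \(f_\ell\). In the polynomial case \(\mathcal M_\ell\) has rank \(1<24\), which is impossible. If \(m_\ell=q_\ell+1-p_\ell=0\), then \(\mathcal M_\ell\) is regular singular at infinity by Lemma~\ref{lem:hypergeometric-formal}. Its subquotients are then also regular singular, since the formal decomposition is compatible with subquotients. This contradicts the nonzero exponential factors \(e^{2\zeta_6^jt}\) of slope \(1/6\) in \(\mathcal N_{f_r}\). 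So \(m_\ell>0\).

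Now compare formal decompositions at infinity. We pass to a common ramification \(w=s^{d}\) with \(d\) divisible by both \(6\) and \(m_\ell\). Exponential factors and their ranks are invariant under this refinement: each factor simply gets rewritten in the new variable, with the rank of its regular singular part unchanged. By Lemma~\ref{lem:hypergeometric-formal}, every exponential factor of \(\mathcal M_\ell\) with nonzero exponent has an associated regular singular factor of rank \(1\). By Theorem~\ref{thm:rank24}, the factor \(e^{2t}\) of \(\mathcal N_{f_r}\), with \(t=2w^{1/6}\), has associated rank \(3\). Because the decomposition is compatible with subquotients, the rank attached to a fixed exponential factor cannot increase in a subquotient. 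A subquotient of \(\mathcal M_\ell\) therefore has rank at most \(1\) at each nonzero exponential factor. This contradicts rank \(3\), so no such \(\ell\) exists.

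The main point to check carefully is the last comparison. The ranks attached to a fixed \(\varphi\) must be compared in the same ramified variable, and the slope-\(1/6\) factor of \(\mathcal N_{f_r}\) must correspond to an actual exponential factor of \(\mathcal M_\ell\). The slope mismatch when \(m_\ell\neq6\) is then automatically covered: such a factor would have to be absent from \(\mathcal M_\ell\), which is still a contradiction. The convention issue, scalar solutions versus horizontal sections, is handled by the duality remark preceding Lemma~\ref{lem:module-operations}, since duality preserves the ranks.
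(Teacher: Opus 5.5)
Your proposal is correct and follows the paper's proof essentially step for step. Both arguments use Lemma~\ref{lem:module-operations}(2) to make the irreducible rank-$24$ module a composition factor of one target module, exclude polynomial targets and $m=0$, and then compare the rank-$3$ regular singular factor at a nonzero exponential with the rank-at-most-$1$ factors from Lemma~\ref{lem:hypergeometric-formal}. The only difference is cosmetic: you let the rank comparison cover $m\neq6$ (an absent factor has rank $0$) instead of first forcing $m=6$ by slopes, and the paper's closing sentence ("either such an exponential factor is absent from the target or \dots") does the same.
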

\begin{proof}
We fix \(r\).
By Theorem~\ref{thm:rank24}, the module \(\mathcal N_{f_r}\) is irreducible of rank \(24\).
Suppose that \eqref{eq:product-forbidden-sum} holds.
By Lemma~\ref{lem:module-operations}(2), \(\mathcal N_{f_r}\) must occur as a composition factor of the differential module associated with one of the equations for the target functions.
By Remark~\ref{rem:terminating-targets}, the differential module generated by a polynomial cannot have this composition factor.

Consider a nonterminating target and put \(m=q_\ell+1-p_\ell\).
If \(m=0\), its formal decomposition has no positive slope.
If \(m>0\), its only positive slope is \(1/m\).
Since \(\mathcal N_{f_r}\) has a part of slope \(1/6\), compatibility of formal decomposition with subquotients forces \(m=6\).
In the target equation for \(m=6\), every exponential factor with nonzero exponent has an associated regular singular factor of rank \(1\), by Lemma~\ref{lem:hypergeometric-formal}.
In \(\mathcal N_{f_r}\), the six exponential factors with nonzero exponents each have an associated regular singular factor of rank \(3\).
After a common ramification, either such an exponential factor is absent from the target or its associated regular singular factor has rank at most \(1\).
The associated regular singular factor therefore cannot have rank \(3\) in a subquotient.
This contradiction excludes every individual target equation.
It also excludes the finite sum.
\end{proof}

\begin{remark}\label{rem:sum-vs-rank}
The comparison is between one irreducible composition factor and one target equation.
It is not a comparison of rank \(18\) with the total irregular rank of an arbitrarily long finite sum.
The latter rank can increase with the number of summands.
Lemma~\ref{lem:module-operations}(2) is the step that makes the exclusion apply to finite sums.
\end{remark}

The same proposition excludes a recurrence relating two consecutive coefficients by rational functions of the index.
\begin{corollary}\label{cor:no-rational-recurrence}
Under the assumptions of Theorem~\ref{thm:rank24}, fix \(r\in\{0,1,2\}\).
There do not exist nonzero polynomials \(A,B\in\C[n]\) such that
\begin{equation}\label{eq:forbidden-recurrence}
 A(n)q_{3(n+1)+r}=B(n)q_{3n+r}
\end{equation}
for all sufficiently large integers \(n\).
The coefficient subsequence is not eventually zero.
\end{corollary}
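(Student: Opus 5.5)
The plan is to reduce the recurrence to an expression of the form \eqref{eq:product-forbidden-sum} and then invoke Proposition~\ref{prop:obstruction}. Suppose \eqref{eq:forbidden-recurrence} holds for all \(n\geq n_0\) with nonzero \(A,B\). First I dispose of the degenerate cases and the claim that the subsequence is not eventually zero. If \(q_{3n+r}=0\) for all large \(n\), then \(f_r\) is a polynomial. Its module \(\mathcal N_{f_r}\) would then have rank \(1\), contradicting the order \(24\) in Theorem~\ref{thm:rank24}. The same argument shows that the subsequence has infinitely many nonzero terms.

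Next I enlarge \(n_0\) so that \(A(n)B(n)\ne0\) for \(n\geq n_0\). The recurrence then shows that \(q_{3n+r}\ne0\) for all \(n\geq n_0\): otherwise the sequence would vanish from some point on. Factor over \(\C\):
\[
 B(n)/A(n)=\alpha\,\prod_i(n+u_i)\Big/\prod_j(n+v_j'),
\]
with \(\alpha\ne0\). If \(B\) were identically zero the sequence would vanish eventually, so \(\alpha\ne0\) is automatic. I shift the index by \(n_0\) and put \(u_i\mapsto u_i+n_0\), \(v_j=v_j'+n_0+1\), and also append the factor \((n+1)/(n+1)\) to create the \(n!\). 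Then
\[
 q_{3(n_0+n)+r}=q_{3n_0+r}\frac{\prod_i(u_i)_n}{\prod_j(v_j)_n}\frac{\alpha^n}{n!}.
\]
Because \(A(n)\ne0\) for \(n\geq n_0\), no \(v_j\) is a nonpositive integer. Hence
\[
 f_r(w)=P(w)+q_{3n_0+r}\,w^{n_0}\,{}_pF_q\left(\begin{matrix}\boldsymbol u\\\boldsymbol v\end{matrix};\alpha w\right),
\]
where \(P\) is a polynomial, and the polynomial \(P\) counts as a terminating summand. The target series converges near \(0\) because \(f_r\) is entire, so its tail coefficients decay. The only point to check is that a convergent nonterminating \({}_pF_q\) forces \(p\leq q+1\), which follows from the ratio test since the ratio is eventually nonzero.

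This gives an expression \eqref{eq:product-forbidden-sum} with \(R_\ell\in\C[w]\), \(\alpha_\ell\ne0\) and admissible lower parameters. Proposition~\ref{prop:obstruction} excludes such an expression, which is a contradiction. The main obstacle is only bookkeeping: keeping the lower parameters out of \(\Z_{\leq0}\) (handled by the shift past the zeros of \(A\)) and treating the \(n!\) normalization. Neither needs more than the argument above.
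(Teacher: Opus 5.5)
Your proof is correct and follows the paper's argument step by step. You rule out an eventually vanishing subsequence via the order $24$ in Theorem~\ref{thm:rank24}, and you start the recurrence past all integer zeros of $A$ and $B$ so that the tail never vanishes. You then iterate to a Pochhammer quotient with an extra upper parameter $1$, write $f_r$ as a polynomial plus $w^{n_0}$ times one ${}_pF_q$, and apply Proposition~\ref{prop:obstruction}.

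The only slip is in bookkeeping. Iterating from $n_0$ gives lower parameters $v_j'+n_0$, not $v_j'+n_0+1$. These still avoid $\Z_{\leq0}$ because $A(n)\neq0$ for all $n\geq n_0$, so the argument is unaffected.
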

\begin{proof}
If the subsequence is eventually zero, then \(f_r\) is a polynomial.
This contradicts the minimal order in Theorem~\ref{thm:rank24}.
Suppose that \eqref{eq:forbidden-recurrence} holds.
We choose \(N\) so that \eqref{eq:forbidden-recurrence} holds for every \(n\geq N\).
We also require \(N\) to exceed every nonnegative integral zero of \(A\) or \(B\).
The recurrence implies that the tail is either identically zero or has no zero terms.
The former case has already been excluded.
In the latter case, factor \(B/A\) into linear factors over \(\C\).
Iteration expresses \(q_{3(N+n)+r}/q_{3N+r}\) as a constant to the power \(n\) times a quotient of Pochhammer symbols.
An additional upper parameter \(1\) compensates for the \(n!\) in \eqref{eq:pfq-definition}.
The lower parameters avoid nonpositive integers by the choice of \(N\).
Thus \(f_r\) is a polynomial plus \(w^N\) times one ordinary generalized hypergeometric series.
The series converges near zero, since the tail of \(f_r\) is entire.
This contradicts Proposition~\ref{prop:obstruction}.
\end{proof}

\section{Proof of the criterion}
\label{sec:main-proof}
In this section, we transfer the exclusion for the product coefficients to Lauricella's function.

\begin{proof}[Proof of Theorem~\ref{thm:main}]
Proposition~\ref{prop:construction} proves sufficiency, the \({}_9F_8\) construction, and the continuation on \(|\kappa|<1/4\).
We prove necessity by contradiction.
Suppose that \eqref{eq:finite-sum} holds and \eqref{eq:integer-condition} fails.
Since \(a,b\notin\Z_{\leq0}\), division of the coefficient of degree \(N\) by \((a)_N(b)_N\) is defined for every \(N\).
By \eqref{eq:grouping}, this operation sends the left side of \eqref{eq:finite-sum} to \eqref{eq:product-coefficients}.
By Lemma~\ref{lem:coefficient-operations}, the term with index \(\ell\) becomes
\begin{equation}\label{eq:inverse-finite-term}
 \frac{A_\ell z^{e_\ell}}{(a)_{e_\ell}(b)_{e_\ell}}
 {}_{p_\ell}F_{q_\ell+6}
 \left(\begin{matrix}\boldsymbol u_\ell\\
 \boldsymbol v_\ell,\Delta_{e_\ell}(a),\Delta_{e_\ell}(b)
 \end{matrix};\frac{\alpha_\ell z^3}{729}\right).
\end{equation}
All additional lower parameters avoid nonpositive integers.
Each transformed series is entire or is a polynomial, by Remark~\ref{rem:coefficient-convergence}.

Fix \(r\in\{0,1,2\}\).
In the coefficient subsequence of degree \(3n+r\), only terms with \(e_\ell\equiv r\pmod3\) contribute.
Dividing by \(z^r\) and putting \(w=z^3\), we obtain
\begin{equation}\label{eq:inverse-residue-sum}
 f_r(w)
 =\sum_{\substack{1\leq\ell\leq L\\e_\ell\equiv r\ (3)}}
 \frac{A_\ell\,w^{(e_\ell-r)/3}}{(a)_{e_\ell}(b)_{e_\ell}}
 {}\times
 {}_{p_\ell}F_{q_\ell+6}
 \left(\begin{matrix}\boldsymbol u_\ell\\
 \boldsymbol v_\ell,\Delta_{e_\ell}(a),\Delta_{e_\ell}(b)
 \end{matrix};\frac{\alpha_\ell w}{729}\right).
\end{equation}
The exponent \((e_\ell-r)/3\) is a nonnegative integer for every contributing term.
Thus \eqref{eq:inverse-residue-sum} is an expression excluded by Proposition~\ref{prop:obstruction}.
If the index set is empty, it asserts \(f_r=0\), which also contradicts Theorem~\ref{thm:rank24}.
This proves necessity.
\end{proof}

\begin{remark}\label{rem:rational-factors-transfer}
Proposition~\ref{prop:obstruction} allows rational factors in \(w\) on the product side.
Theorem~\ref{thm:main} allows polynomial factors in \(\kappa\) on the \(F_C\) side.
We do not transfer the assertion of Proposition~\ref{prop:obstruction} allowing rational factors by applying the inverse operation on coefficients.
Division of Taylor coefficients by \((a)_N(b)_N\) is not multiplication of functions and need not preserve a rational factor.
\end{remark}

\section{Specializations and the scope of the condition}
\label{sec:examples}
In this section, we give specializations of the criterion and of the sufficient construction.

We first take two equal lower parameters.
\begin{corollary}\label{cor:two-equal}
Suppose that \(a,b\notin\Z_{\leq0}\) and \(c,d\notin\Z_{\leq0}\cup(\tfrac12+\Z)\).
The germ
\[
 F_C^{(3)}(a,b;d,c,c;\kappa,\omega\kappa,\omega^2\kappa)
\]
has a finite expression of the form \eqref{eq:finite-sum} if and only if \(d-c\in\Z\) or \(d+c\in\Z\).
\end{corollary}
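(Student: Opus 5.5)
This corollary is the special case \((c_0,c_1,c_2)=(d,c,c)\) of Theorem~\ref{thm:main}, so the plan is to check the hypotheses and then simplify the condition \eqref{eq:integer-condition}. The assumptions \(a,b\notin\Z_{\leq0}\) and \(c,d\notin\Z_{\leq0}\cup(\tfrac12+\Z)\) are exactly \eqref{eq:main-assumptions} for this choice. Theorem~\ref{thm:main} therefore says that the germ has a finite expression \eqref{eq:finite-sum} if and only if, for \(j=1,2\), we have \(c-d\in\Z\) or \(c+d\in\Z\). The condition for \(j=0\) holds trivially, since \(c_0-c_0=0\).

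Both conditions for \(j=1\) and \(j=2\) involve the same pair \(c,d\). Together they therefore say exactly that \(d-c\in\Z\) or \(d+c\in\Z\), which is the stated condition.

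There is one point to keep straight. In Theorem~\ref{thm:main} the distinguished parameter \(c_0\) sits in the first slot, and here that slot holds \(d\). The criterion compares every \(c_j\) with \(c_0=d\), so the ordering causes no problem: the equivalence relation \(\sim\) used in the proof is symmetric. I therefore expect no real obstacle. For the sufficient direction one may note that Proposition~\ref{prop:construction} applies directly. If \(d\notin\Z\), one writes \(c=1+\varepsilon(d-1)+m\) with the same \(\varepsilon\) and \(m\) for \(j=1,2\). If \(d\in\Z\), then \(c\) is a positive integer by the assumptions.
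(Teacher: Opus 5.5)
Your proposal is correct and follows the paper's proof: both specialize Theorem~\ref{thm:main} to \((c_0,c_1,c_2)=(d,c,c)\) and observe that \eqref{eq:integer-condition} reduces to \(d-c\in\Z\) or \(d+c\in\Z\). The paper phrases this reduction through the equivalence relation, whereas you read it off directly from the comparisons with \(c_0=d\); that is equally valid, and your remarks on Proposition~\ref{prop:construction} are correct but not needed.
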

\begin{proof}
Requiring an integral sum or an integral difference defines an equivalence relation.
Thus \eqref{eq:integer-condition} for the three lower parameters \(d,c,c\) is equivalent to the stated condition.
Apply Theorem~\ref{thm:main}.
\end{proof}

The identity in Proposition~\ref{prop:reflected-product} gives an explicit formula without requiring integer differences among all lower parameters.
\begin{corollary}\label{cor:reflected-fc}
Suppose that \(c\notin\Z\cup(\tfrac12+\Z)\).
For arbitrary \(a,b\in\C\), we have the identity of germs
\begin{align}\label{eq:reflected-fc}
 &F_C^{(3)}(a,b;2-c,c,c;\kappa,\omega\kappa,\omega^2\kappa)\notag\\
 &\qquad=\sum_{r=0}^2\gamma_r(c)(a)_r(b)_r\kappa^r
 {}_8F_7\left(\begin{matrix}
 \Delta_r(a),\Delta_r(b),\boldsymbol A_r(c)\\
 \boldsymbol B_r(c)
 \end{matrix};64\kappa^3\right),
\end{align}
where \(\gamma_r(c),\boldsymbol A_r(c),\boldsymbol B_r(c)\) are the coefficients and complete parameter lists in Proposition~\ref{prop:reflected-product}.
Every lower parameter of every target function avoids nonpositive integers, and the finite sum continues the germ to \(|\kappa|<1/4\).
\end{corollary}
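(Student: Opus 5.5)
The plan is to feed Proposition~\ref{prop:reflected-product} into the coefficient identities of Lemma~\ref{lem:coefficient-operations}. By \eqref{eq:product-coefficients} with \((c_0,c_1,c_2)=(2-c,c,c)\), the generating function \(\sum_N q_Nz^N\) is exactly the left side of \eqref{eq:reflected-product}, with \(2-c\) in position \(0\), so no cyclic factor \(\omega^{jr}\) appears. Since \(c\notin\Z\cup(\tfrac12+\Z)\), also \(2-c\notin\Z_{\leq0}\), so the \(F_C\) series is defined. The grouping \eqref{eq:grouping} holds for arbitrary \(a,b\), because it is only a rearrangement of an absolutely convergent series for \(|\kappa|<1/9\). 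Thus the left side of \eqref{eq:reflected-fc} equals \(\sum_N(a)_N(b)_Nq_N\kappa^N\) as a germ.

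Next I would apply the forward operation \eqref{eq:forward-operation} to each summand \(\gamma_r(c)z^r{}_2F_7(\boldsymbol A_r(c);\boldsymbol B_r(c);Kz^3)\) with \(h=r\) and \(\beta=K\). This operation requires no hypothesis on \(a,b\); it is a coefficientwise identity of formal series. It yields
\[
 \gamma_r(c)(a)_r(b)_r\kappa^r\,{}_8F_7\left(\begin{matrix}\boldsymbol A_r(c),\Delta_r(a),\Delta_r(b)\\ \boldsymbol B_r(c)\end{matrix};64\kappa^3\right),
\]
since \(729K=64\) and the number of upper parameters rises from \(2\) to \(8\). Reordering the upper parameters gives \eqref{eq:reflected-fc}. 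Because the three summands occupy disjoint residue classes of exponents modulo \(3\), the coefficient comparison is termwise and the formal identity holds exactly.

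For the lower parameters, the final statement of Proposition~\ref{prop:reflected-product} already says that the entries of \(\boldsymbol B_r(c)\) avoid \(\Z_{\leq0}\). The forward operation adds no lower parameters, so nothing new has to be checked. This is where the hypothesis on \(a,b\) disappears: only the inverse operation \eqref{eq:inverse-operation} needs \(a,b\notin\Z_{\leq0}\). If \(a\) or \(b\) is a nonpositive integer, some entries of \(\Delta_r(a)\) or \(\Delta_r(b)\) are nonpositive integers, and the corresponding \({}_8F_7\) simply terminates. That is still allowed, and it is consistent with the left side becoming a polynomial.

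For convergence, each nonterminating \({}_8F_7\) has \(p=q+1\) and converges for \(|64\kappa^3|<1$, that is, for \(|\kappa|<1/4\), as in Remark~\ref{rem:coefficient-convergence}. The sum is therefore holomorphic there and agrees with the germ near \(0\), which gives the continuation. The only step needing care is the bookkeeping of the constant \(K\) and the parameter triples \(\Delta_r\); everything else follows directly from the earlier results.
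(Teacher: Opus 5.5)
Your proposal is correct and follows the paper's proof. Both apply the forward coefficient operation \eqref{eq:forward-operation} with \(h=r\), \(\beta=K\) to Proposition~\ref{prop:reflected-product}. Both then note that this adds no lower parameters and divides by no Pochhammer symbol in \(a,b\), so terminating targets are allowed, and both read off convergence on \(|64\kappa^3|<1\).

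One small imprecision in your aside: for \(a\in\Z_{\leq0}\) with \(-a<r\), no entry of \(\Delta_r(a)\) need be a nonpositive integer (e.g.\ \(\Delta_1(0)=(1/3,2/3,1)\)). In that case the prefactor \((a)_r\) vanishes, so the conclusion is unaffected.
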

\begin{proof}
Apply \eqref{eq:forward-operation} to \eqref{eq:reflected-product}.
This operation does not divide by any Pochhammer symbol involving an upper parameter, so upper parameters for which the series terminates are also allowed.
Each target series converges on \(|64\kappa^3|<1\).
\end{proof}

\begin{example}\label{ex:reflected}
For \(c=2/3\), the lower parameters in \eqref{eq:reflected-fc} are \(4/3,2/3,2/3\), and
\[
 \bigl(\gamma_0(2/3),\gamma_1(2/3),\gamma_2(2/3)\bigr)
 =\left(1,-\frac34,\frac{117}{140}\right).
\]
The lower parameters \(4/3\) and \(2/3\) have nonintegral difference and integral sum.
Theorem~\ref{thm:main} also includes all shifts of these parameters by independent integers for which the original lower parameters avoid nonpositive integers.
For example, the triple of lower parameters \((2/3,4/3,7/3)\) satisfies the criterion.
In the construction we take \(c=2/3\) and write these parameters as \(c,2-c,(2-c)+1\).
\end{example}

\begin{example}\label{ex:excluded}
The triple of lower parameters \((2/3,4/3,5/4)\) does not satisfy \eqref{eq:integer-condition}.
The first two parameters have integral sum, but
\[
 \frac54-\frac23=\frac7{12},\qquad
 \frac54+\frac23=\frac{23}{12}
\]
are not integers.
For every \(a,b\notin\Z_{\leq0}\), the germ
\[
 F_C^{(3)}(a,b;2/3,4/3,5/4;\kappa,\omega\kappa,\omega^2\kappa)
\]
therefore has no expression \eqref{eq:finite-sum}.
The conclusion allows any finite number of summands and arbitrary nonnegative integers \(p_\ell,q_\ell\).
\end{example}

\begin{example}\label{ex:integers}
For the triple of lower parameters \((1,2,4)\), we can take \(C=4\) in the integer case of Proposition~\ref{prop:construction}.
The downward shifts are \((-3,-2,0)\).
At intermediate steps and in the final functions, all lower parameters avoid nonpositive integers.
No limit of singular parameter values is needed.
\end{example}

\begin{remark}\label{rem:excluded-parameters}
The assumption \(a,b\notin\Z_{\leq0}\) is needed for necessity.
For example, \(F_C^{(3)}(0,b;c_0,c_1,c_2;\kappa,\omega\kappa,\omega^2\kappa)=1\) for every triple of lower parameters avoiding nonpositive integers, including the triple in Example~\ref{ex:excluded}.
Lower parameters that are half-integers are excluded for a different reason.
The group calculation in Proposition~\ref{prop:bessel-group} changes when \(\nu\in\tfrac12+\Z\), and the independent \(\SL_2\) factors used in Theorem~\ref{thm:rank24} are then unavailable.
We do not include those cases in the classification.
\end{remark}

\begin{remark}\label{rem:finite-sum-vs-ratio}
Theorem~\ref{thm:main} classifies finite sums of functions, rather than rational ratios between consecutive coefficients in each residue class.
Corollary~\ref{cor:no-rational-recurrence} proves that such ratios cannot exist when \eqref{eq:integer-condition} fails.
We do not assert the converse for every choice of signs and integers in \eqref{eq:signed-shifts}.
\end{remark}

\section*{Acknowledgements}
The author used OpenAI Codex for computational exploration and assistance with drafting and revision.

\AtNextBibliography{\raggedright}
\printbibliography
\end{document}